     \def\Cay{\mathrm{Cay}}  
\def\D{\mathrm{D}}
 \def\Ga{\Gamma} \def\Gal{\mathrm{Gal}}   \def\GF{\mathrm{GF}}
\def\SL{\mathrm{SL}}
\newtheorem{theorem}{Theorem}[section]
\newtheorem{lemma}[theorem]{Lemma}
\newtheorem{corollary}[theorem]{Corollary}
\theoremstyle{definition}
\newtheorem{definition}[theorem]{Definition}
\definecolor{VeryLightBlue}{rgb}{0.9,0.9,1}
\definecolor{LightBlue}{rgb}{0.8,0.8,1}
\definecolor{MidBlue}{rgb}{0.5,0.5,1}
\definecolor{DarkBlue}{rgb}{0,0,0.6}
\definecolor{Blue}{rgb}{0,0,1}
\definecolor{Gold}{rgb}{1,0.843,0}
\definecolor{LightGreen}{rgb}{0.88,1,0.88}
\definecolor{MidGreen}{rgb}{0.6,1,0.6}
\definecolor{DarkGreen}{rgb}{0,0.6,0}
\definecolor{VeryLightYellow}{rgb}{1,1,0.9}
\definecolor{LightYellow}{rgb}{1,1,0.6}
\definecolor{MidYellow}{rgb}{1,1,0.5}
\definecolor{DarkYellow}{rgb}{1,1,0.2}
\definecolor{DarkPurple}{rgb}{.6,0,1}
\definecolor{Red}{rgb}{1,0,0}
\definecolor{VeryLightRed}{rgb}{1,0.9,0.9}
\definecolor{LightRed}{rgb}{1,0.8,0.8}
\definecolor{MidRed}{rgb}{1,0.55,0.55}
\def\b0{{\bf 0}}
\def\ZZZ{\mathbb{Z}}
\long\def\delete#1{}
\begin{document}

\title[Perfect codes in Cayley graphs]{Perfect codes in Cayley graphs}

\author[Huang]{He Huang}
\address{(Huang) School of Mathematical Sciences\\Peking University\\Beijing 100871, P. R. China}
\email{1301110019@math.pku.edu.cn}

\author[Xia]{Binzhou Xia}
\address{(Xia) School of Mathematics and Statistics\\The University of Melbourne\\Parkville, VIC 3010, Australia}
\email{binzhou.xia@unimelb.edu.au}

\author[Zhou]{Sanming Zhou}
\address{(Zhou) School of Mathematics and Statistics\\The University of Melbourne\\Parkville, VIC 3010, Australia}
\email{sanming@unimelb.edu.au}


\openup 0.5\jot

\maketitle

\begin{center}
\emph{\small Dedicated to Professor Yixun Lin on the occasion of his 80th birthday}
\end{center}

\begin{abstract}
Given a graph $\Ga$, a subset $C$ of $V(\Ga)$ is called a perfect code in $\Ga$ if every vertex of $\Ga$ is at distance no more than one to exactly one vertex in $C$, and a subset $C$ of $V(\Ga)$ is called a total perfect code in $\Ga$ if every vertex of $\Ga$ is adjacent to exactly one vertex in $C$. In this paper we study perfect codes and total perfect codes in Cayley graphs, with a focus on the following themes: when a subgroup of a given group is a (total) perfect code in a Cayley graph of the group; and how to construct new (total) perfect codes in a Cayley graph from known ones using automorphisms of the underlying group. We prove several results around these questions.

\textit{Key words:} perfect code; efficient dominating set; Cayley graph; tilings of groups

\textit{AMS Subject Classification (2010):} 05C25, 05C69, 94B25
\end{abstract}

\section{Introduction}
\label{sec:int}

In the paper all groups considered are finite, and all graphs considered are finite and simple. Let $\Ga=(V(\Ga),E(\Ga))$ be a graph and $t \ge 1$ an integer. A subset $C$ of $V(\Ga)$ is called a \emph{perfect $t$-code} \cite{K86} in $\Ga$ if every vertex of $\Ga$ is at distance no more than $t$ to exactly one vertex of $C$ (in particular, $C$ is an independent set of $\Ga$). A perfect 1-code is simply called a \emph{perfect code}. A subset $C$ of $V(\Ga)$ is said to be a \emph{total perfect code} \cite{Zhou2016} in $\Ga$ if every vertex of $\Ga$ has exactly one neighbour in $C$ (in particular, $C$ induces a matching in $\Ga$ and so $|C|$ is even). In graph theory, a perfect code in a graph is also called an \emph{efficient dominating set} \cite{DS03} or \emph{independent perfect dominating set} \cite{L01}, and a total perfect code is called an \emph{efficient open dominating set} \cite{HHS}.

The concepts above were developed from the work in \cite{Biggs1}, which in turn has a root in coding theory. In the classical setting, a $q$-ary code is a subset of $S^n$, where $q$ and $n$ are positive integers and $S^n$ is the set of words of length $n$ over a set $S$ of size $q$. A $q$-ary code $C \subseteq S^n$ is called a \emph{perfect $t$-code} \cite{Heden1, vL} if every word in $S^n$ is at distance no more than $t$ to exactly one codeword of $C$, where the (Hamming) distance between two words is the number of positions in which they differ. Thus, the $q$-ary perfect $t$-codes of length $n$ in the classical setting are precisely the perfect $t$-codes in Hamming graph $H(n, q)$, the graph with vertex set $S^n$ and edges joining pairs of words of Hamming distance one. Beginning with \cite{Biggs1} and \cite{Del}, perfect codes in distance-transitive graphs and association schemes in general have received considerable attention in the literature. See, for example, \cite{B, Biggs1, HS, Smith1, SE}. Since Hamming graphs are distance-transitive, perfect codes in distance-transitive graphs can be thought as generalizations of perfect codes in the classical setting.

Perfect codes in Cayley graphs are another generalization of perfect codes in the classical setting because $H(n, q)$ is the Cayley graph of $(\ZZZ/q\ZZZ)^n$ with respect to the set of elements of $(\ZZZ/q\ZZZ)^n$ with exactly one nonzero coordinate. We denote by $e$ the identity element of a group $G$, and set $T^{-1}=\{t^{-1}\mid t\in T\}$ for any subset $T$ of $G$. Given a group $G$ and a subset $S$ of $G$ with $e\notin S$ and $S^{-1}=S$, the \emph{Cayley graph} $\Cay(G,S)$ of $G$ with respect to the \emph{connection set} $S$ is defined to be the graph with vertex set $G$ such that $x,y \in G$ are adjacent if and only if $yx^{-1}\in S$. A perfect $t$-code in a Cayley graph $\Cay(G,S)$ (just like in a general graph) can be thought as a tiling of $G$ by the balls of radius $t$ centred at the vertices of $C$ with respect to the graph distance. Moreover, if $X$ is a group of order $q$, $G = X \times \cdots \times X$ is the direct product of $n$ copies of  $X$, and $S$ consists of those elements of $G$ with exactly one non-identity coordinate, then $\Cay(G,S)$ is isomorphic to $H(n,q)$, and every subgroup of $G$ is a group code. So the notion of perfect codes in Cayley graphs can be also viewed as a generalization of the concept of perfect group codes \cite{vL}. Thus the study of perfect codes in Cayley graphs is highly related to both coding theory and group theory.

Perfect codes in Cayley graphs have received considerable attention in recent years.
In \cite{MBG07} sufficient conditions for two families of Cayley graphs of quotient rings of $\ZZZ[i]$ and $\ZZZ[\rho]$ to contain perfect $t$-codes were given, where $i = \sqrt{-1}$ and $\rho = (1+\sqrt{-3})/2$, and these conditions were proved to be necessary in \cite{Z15} in a more general setting. Related works can be found in \cite{MBCSG10} and \cite{MBG09}. In \cite{T04} it was proved that there is no perfect code in any Cayley graph of $\SL(2, 2^f)$, $f > 1$ with respect to a conjugation-closed connection set. In \cite{E87} necessary conditions for the existence of perfect codes in a Cayley graph with a conjugation-closed connection set were obtained by way of irreducible characters of the underlying group. In \cite{L01} connections between perfect codes that are closed under conjugation and a covering projection from the Cayley graph involved to a complete graph were given, and counterpart results for total perfect codes were obtained in \cite{Zhou2016}. Perfect codes in circulants were studied in \cite{YPD14, DSLW16, FHZ16, OPR07, KM13, TM2013} (a circulant is a Cayley graph on a cyclic group). In \cite{DS03}, E-chains of Cayley graphs on symmetric groups were constructed, where an E-chain is a countable family of nested graphs each containing a perfect code.

Perfect codes in Cayley graphs of groups are closely related to factorizations and tilings of groups. A \emph{factorization}\footnote{A related but different notion in group theory not to be studied in this paper is as follows. A factorization of a group $G$ is a pair of subgroups $A, B$ of $G$ such that $G = AB$. In this definition $A$ and $B$ are required to be subgroups of $G$ but each element of $G$ is not required to be expressed uniquely as $ab$ with $a \in A$ and $b \in B$.} \cite{SS} of a group $G$ is a tuple $(A_1, \ldots, A_k)$ of subsets of $G$ such that every element of $G$ can be uniquely represented as $a_1 \cdots a_k$ with each $a_i \in A_i$, so that $G = A_1 \cdots A_k$. Such a factorization is \emph{normed} if $e$ is contained in each factor $A_i$. A \emph{tiling} $(A, B)$
of $G$ is simply a normed factorization $G = AB$ of $G$ into two factors \cite{Dinitz06}. It is readily seen (Lemma \ref{thm1}(a)) that if $(A, B)$ is a tiling of $G$ with $A^{-1} = A$, then $B$ is a perfect code of $\Cay(G, A \setminus \{e\})$. The study of factorizations of abelian groups was initiated by G. Haj\'{o}s \cite{Hajos} in his proof of a well-known conjecture of Minkowski. Since then there has been extensive research on factorizations and tilings of abelian groups; see for example \cite{Dinitz06, Szabo06, SS} for related results and background information. As far as we know, in the literature most studies on tilings are about the abelian case. In the present paper we do not restrict ourselves to abelian groups.

Despite the results above, perfect codes in Cayley graphs deserve further study due to their strong connections with coding theory, group theory and graph theory. In this paper we study perfect codes and total perfect codes in Cayley graphs with a focus on the following themes: (i) when a subgroup of a given group is a (total) perfect code in a Cayley graph of the group; (ii) how to construct new (total) perfect codes in a Cayley graph from known ones using automorphisms of the underlying group. In classical coding theory, linear codes \cite{Heden1, vL} (subspaces of linear spaces over finite fields) have been studied extensively. In some sense, subgroups of a group that are perfect codes in some Cayley graphs of the group are an analog of perfect linear codes. (In fact, when $q$ is a prime power, $H(n,q)$ is a Cayley graph of the additive group of $\GF(q)^n$ and a perfect linear code can be viewed as a subgroup of the additive group of $\GF(q)^n$.) This observation motivated our study of theme (i). In a sense, this theme is dual to the problem of constructing perfect codes (or proving there is none) in a given graph. Theme (i) can also be phrased as the existence of an inverse-closed tiling mate of a given subgroup, which is of interest from a tiling point of view. The study of theme (ii) was motivated by our lack of general methods for constructing (total) perfect codes in Cayley graphs, and as such constructions of new (total) perfect codes from known ones would be desirable.

The main results in this paper are as follows. In section \ref{sec:subgroup} we first give a necessary and sufficient condition for a normal subgroup of a group to be a (total) perfect code in some Cayley graph of the group (Theorem \ref{thm3}). As a corollary we obtain that every normal subgroup with odd order or odd index is a perfect code in some Cayley graph of the group, and every normal subgroup with even order but odd index is a total perfect code in some Cayley graph of the group (Corollary \ref{cor2}). We prove further that this sufficient condition is also necessary when the group is cyclic (Corollary \ref{cor3}). In general, for any abelian group $G$, we obtain a necessary and sufficient condition (Theorem \ref{Abelian}) for a subgroup $H$ of $G$ with $H \cap P$ cyclic to be a (total) perfect code in some Cayley graph of $G$, where $P$ is the Sylow 2-subgroup of $G$. We also obtain a necessary and sufficient condition (Theorem \ref{thm:dih}) for a proper subgroup of the dihedral group $\D_{2n}$ to be a (total) perfect code in some Cayley graph of $\D_{2n}$.

An automorphism of a group $G$ is said to be perfect-code-preserving if it maps every perfect code in any Cayley graph of $G$ to a perfect code in the same Cayley graph. Total-perfect-code-preserving automorphisms are understood similarly. In section \ref{sec:pcp} we first prove that all power automorphisms of any abelian group $G$ are both perfect-code-preserving and total-perfect-code-preserving, and in particular so is the automorphism of $G$ that maps $g \in G$ to $g^n$ for any fixed integer $n \ge 1$ coprime to the order of $G$ (Corollary \ref{thm4}). We then prove that every perfect-code-preserving inner automorphism of any group must be a power automorphism (Theorem \ref{prop3}).

\section{Subgroup perfect codes}
\label{sec:subgroup}

For brevity, if a subset $C$ of a group $G$ is a perfect code (respectively, total perfect code) in some Cayley graph of $G$, then $C$ is simply called a \emph{perfect code} (respectively, \emph{total perfect code}) of $G$. In this section we study when a subgroup of a given group is a perfect code or total perfect code of the group. Such a subgroup is called a \emph{subgroup perfect code} or \emph{subgroup total perfect code} of the group.

The following results follow directly from the definitions of perfect and total perfect codes.

\begin{lemma}\label{Transversal}
Let $\Ga=\Cay(G,S)$ be a Cayley graph of a group $G$, and let $H$ be a subgroup of $G$.
\begin{itemize}
\item[(a)] $H$ is a perfect code in $\Ga$ if and only if $S\cup\{e\}$ is a left transversal of $H$ in $G$.
\item[(b)] $H$ is a total perfect code in $\Ga$ if and only if $S$ is a left transversal of $H$ in $G$.
\end{itemize}
\end{lemma}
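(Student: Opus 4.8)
The plan is to prove both parts directly from the definitions, unwinding what it means for $H$ to be a (total) perfect code in $\Ga = \Cay(G,S)$ and showing this matches the stated transversal condition. The key structural fact I would exploit is that in a Cayley graph the neighbourhood of a vertex $x \in G$ is exactly $Sx$, so adjacency and domination can be expressed purely in terms of left cosets of $S$ (or $S\cup\{e\}$). Recall that $Sx := \{sx \mid s \in S\}$ is the set of neighbours of $x$, and a left transversal of $H$ in $G$ is a set containing exactly one element from each left coset $Hg$.

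For part (a), I would argue as follows. By definition, $H$ is a perfect code in $\Ga$ if and only if every vertex $x \in G$ is at distance at most one from exactly one element of $H$; equivalently, the closed neighbourhoods $\{(S\cup\{e\})h \mid h \in H\}$ partition $G$. Since $|(S\cup\{e\})h| = |S\cup\{e\}|$ for every $h$, this partition condition is equivalent to saying that each $x \in G$ can be written uniquely as $x = s'h$ with $s' \in S\cup\{e\}$ and $h \in H$. Now I would translate this into a coset statement: writing $x = s'h$ means precisely that $s'$ and $x$ lie in the same left coset $Hx = Hs'$... but care is needed here, since $s'h$ places $s'$ on the \emph{left}. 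In fact $x = s'h \iff s' = x h^{-1} \iff s' \in xH$, so the condition is that each \emph{right} coset $xH$ meets $S\cup\{e\}$ in exactly one point. To match the lemma's phrasing of a \emph{left} transversal, I would observe that $H$ being a perfect code in $\Cay(G,S)$ with $S=S^{-1}$ is equivalent, via inversion (an anti-automorphism of $G$ that fixes $H$ setwise and sends $S\cup\{e\}$ to itself since $S^{-1}=S$), to the left-coset statement; alternatively I would simply verify directly that $S\cup\{e\}$ is a left transversal by checking it contains exactly one representative of each left coset $Hg$, using that the map $x \mapsto x^{-1}$ interchanges the two conditions.

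For part (b), the argument is parallel but uses open neighbourhoods: $H$ is a total perfect code in $\Ga$ if and only if every $x \in G$ has exactly one neighbour in $H$, i.e. the sets $\{Sh \mid h \in H\}$ partition $G$. By the same counting and coset translation as above, this holds if and only if $S$ (rather than $S\cup\{e\}$) is a left transversal of $H$ in $G$. The only genuine point requiring attention is the identity element: since $e \notin S$, the coset $H = He$ must still be represented, and one should check that the total-perfect-code condition forces exactly one element of $S$ into $H$, consistent with $S$ (not $S \cup \{e\}$) being the transversal; this is automatic because every vertex, including those in $H$, must have exactly one neighbour in $H$.

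The steps are all short, and the main obstacle is not difficulty but bookkeeping: one must be careful and consistent about the left/right distinction. Adjacency $yx^{-1}\in S$ puts the connection set on the left of the neighbour relation, the closed-neighbourhood partition naturally produces a condition on right cosets $xH$, and the lemma asserts a \emph{left} transversal, so I expect the delicate step to be justifying the passage between these via the inverse map and the hypothesis $S^{-1}=S$. Once that translation is set up cleanly, both equivalences follow by the same elementary counting argument distinguishing only whether $e$ is adjoined to $S$.
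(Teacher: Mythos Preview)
Your approach is correct and is essentially what the paper has in mind (the paper gives no proof, merely stating that the lemma ``follows directly from the definitions''). However, you have introduced an unnecessary complication through a terminological slip.

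You correctly arrive at the condition that every $x\in G$ can be written uniquely as $x=s'h$ with $s'\in S\cup\{e\}$ and $h\in H$. You then write $s'\in xH$ and call $xH$ a \emph{right} coset. But $xH=\{xh:h\in H\}$ is a \emph{left} coset of $H$; right cosets are of the form $Hx$. A left transversal of $H$ is, by definition, a set meeting each left coset $gH$ in exactly one point, equivalently a set $T$ such that every $g\in G$ factors uniquely as $g=th$ with $t\in T$, $h\in H$. So the condition you derived is already, on the nose, the statement that $S\cup\{e\}$ is a left transversal of $H$. No passage through the inverse map or the hypothesis $S^{-1}=S$ is needed; that detour is harmless but superfluous. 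With this correction the bookkeeping you worried about disappears, and both parts are the one-line unwinding of definitions that the paper asserts.
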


\subsection{Normal subgroups}

We now establish a necessary and sufficient condition for a normal subgroup $H$ of a group $G$ to be a perfect code or total perfect code of $G$ in terms of the following property: for any $g\in G$, $g^2\in H$ implies $(gh)^2=e$ for some $h\in H$, that is,
\begin{equation}\label{Property}
\forall g\in G\ (g^2\in H)\ \exists h\in H\ ((gh)^2=e).
\end{equation}

\begin{theorem}\label{thm3}
Let $G$ be a group and $H$ a normal subgroup of $G$.
\begin{itemize}
\item[(a)] $H$ is a perfect code of $G$ if and only if~\eqref{Property} holds.
\item[(b)] $H$ is a total perfect code of $G$ if and only if $|H|$ is even and~\eqref{Property} holds.
\end{itemize}
\end{theorem}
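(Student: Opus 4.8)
The plan is to translate both statements into the language of left transversals via Lemma~\ref{Transversal} and then analyze how the inverse-closed requirement on the connection set interacts with the coset structure of $H$ in $G$. By Lemma~\ref{Transversal}, $H$ is a perfect code of $G$ precisely when $G$ admits an inverse-closed subset $S$ with $e\notin S$ such that $S\cup\{e\}$ is a left transversal of $H$, and $H$ is a total perfect code precisely when such an $S$ is itself a left transversal. In either case the task is to choose one representative from each left coset of $H$ so that the resulting set of representatives is closed under taking inverses. Because $H$ is normal, the inverse of a left coset $gH$ is the left coset $(gH)^{-1}=Hg^{-1}=g^{-1}H$, so inversion acts as an involution on the set of cosets. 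I would therefore partition the cosets into three types: the identity coset $H$; the non-identity self-inverse cosets $gH$ with $gH=g^{-1}H$, equivalently $g^2\in H$; and the remaining cosets, which fall into pairs $\{gH,g^{-1}H\}$ with $gH\neq g^{-1}H$.

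For a pair $\{gH,g^{-1}H\}$ of distinct mutually inverse cosets I can always pick an arbitrary representative $s\in gH$ (necessarily $s\neq e$) and take $s^{-1}\in g^{-1}H$ as the representative of the other coset; this contributes an inverse-closed pair to $S$ with no constraint. The substance of the theorem lies in the self-inverse cosets: there the representative $s$ must be its own inverse, i.e.\ $s^2=e$. Writing $s=gh$ with $h\in H$, the existence of such a representative in a non-identity self-inverse coset $gH$ is exactly the assertion that some $h\in H$ satisfies $(gh)^2=e$, which is precisely condition~\eqref{Property} (the case $gH=H$ being vacuous, realized by $h=e$). This yields part (a): if $H$ is a perfect code, reading off the representative of each self-inverse coset from the transversal forces~\eqref{Property}; conversely,~\eqref{Property} lets me assemble a valid inverse-closed transversal $S\cup\{e\}$ coset by coset, using $e$ itself as the representative of $H$.

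For part (b) the only change is at the identity coset. Since a total perfect code requires $e\notin S$ while $S$ is a full transversal, the coset $H$ must now be represented by some $s_0\in H$ with $s_0\neq e$ and $s_0^{-1}=s_0$, that is, a nontrivial involution in $H$; such an element exists if and only if $|H|$ is even. Once this representative is fixed, the remaining cosets are treated exactly as in part (a), so $H$ is a total perfect code if and only if $|H|$ is even and~\eqref{Property} holds.

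The main point to get right is this discrepancy at the identity coset. Condition~\eqref{Property} is automatically satisfied for $g\in H$ (take $h=e$) and hence says nothing about whether $H$ contains a nontrivial involution; this is exactly why the parity hypothesis must be imposed separately in the total case but is absorbed for free in the ordinary case. A secondary technical point is to confirm that normality is genuinely used only through the identity $(gH)^{-1}=g^{-1}H$, so that inversion preserves the family of \emph{left} cosets and the representative-pairing argument is well defined.
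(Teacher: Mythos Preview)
Your proposal is correct and follows essentially the same approach as the paper's proof: translate via Lemma~\ref{Transversal} to the existence of an inverse-closed left transversal, partition the cosets into the identity coset, the self-inverse (involution) cosets of $G/H$, and the remaining inverse pairs, and then assemble the transversal coset by coset. One small slip: for $g\in H$ the witness in~\eqref{Property} is $h=g^{-1}$, not $h=e$ (which would force $g^2=e$); this does not affect the argument.
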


\begin{proof}
Suppose that $H$ is a perfect code or a total perfect code in a Cayley graph of $G$. Then by Lemma~\ref{Transversal} there exists a left transversal $L$ of $H$ in $G$ such that $L^{-1}=L$. Let $g$ be an element of $G$ with $g^2\in H$. Since $L$ is a left transveral, there exists an element $g_1$ of $L$ such that $g\in g_1H$. Then $gH=g_1H$ and $g^{-1}H=Hg^{-1}=Hg_1^{-1}=g_1^{-1}H$ as $H$ is normal in $G$. However, $g^2\in H$ implies that $g^{-1}H=gH$. Hence $g_1^{-1}H=g_1H$, which implies $g_1^{-1}=g_1$ since $g_1 \in L$ and $g_1^{-1}\in L^{-1}=L$. Now $g_1^2=e$, and from $g\in g_1H$ we obtain that $g_1=gh$ for some $h\in H$. It follows that $(gh)^2=e$. This together with the fact that the size of every total perfect code must be even proves the ``only if" part in statements (a) and (b).

Suppose that $H$ is a normal subgroup of $G$ such that for any $g\in G$, $g^2\in H$ implies
$(gh)^2=e$ for some $h\in H$. Let ${\mathcal I} = \{x_1H,\dots,x_sH\}$ be the set of involutions in $G/H$, and let ${\mathcal J}$ be the set of elements of order at least $3$ in $G/H$. Then there exist $g_1,\dots,g_t\in G$ such that
\[
{\mathcal J}=\{g_1H,(g_1H)^{-1},\dots,g_tH,(g_tH)^{-1}\}=\{g_1H,g_1^{-1}H,\dots,g_tH,g_t^{-1}H\}.
\]
For any element $x_iH$ of ${\mathcal I}$, we have $(x_iH)^2=H$, that is, $x_i^2\in H$. Hence there exists $h_i\in H$ such that $(x_ih_i)^2=e$. Denote $y_i=x_ih_i$ for $i\in\{1,\dots,s\}$, and take $S=\{y_1,\dots,y_s,g_1,g_1^{-1},\dots,g_t,g_t^{-1}\}$. Then $S\cup\{e\}$ is a left transversal of $H$ in $G$, and $S^{-1}=S$. In addition, if $|H|$ is even, then $H$ contains an involution $g_0$ and $R:=S\cup\{g_0\}$ is a left transversal of $H$ in $G$ with $R^{-1}=R$. By Lemma~\ref{Transversal}, $H$ is a perfect code in $\Cay(G,S)$, and $H$ is a total perfect code in $\Cay(G,R)$ if $|H|$ is even. This completes the proof of the ``if" part in statements (a) and (b).
\end{proof}

Note that the proof above gives a construction of a Cayley graph that admits a given normal subgroup satisfying \eqref{Property} as a perfect code, as well as a construction of a Cayley graph that admits a given normal subgroup of even order satisfying \eqref{Property} as a total perfect code.

\begin{corollary}\label{cor2}
Let $G$ be a group and $H$ a normal subgroup of $G$.
\begin{itemize}
\item[(a)] If either $|H|$ or $|G/H|$ is odd, then $H$ is a perfect code of $G$.
\item[(b)] If $|H|$ is even and $|G/H|$ is odd, then $H$ is a total perfect code of $G$.
\end{itemize}
\end{corollary}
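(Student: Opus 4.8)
The plan is to deduce Corollary~\ref{cor2} directly from Theorem~\ref{thm3} by verifying that property~\eqref{Property} holds under each hypothesis. By Theorem~\ref{thm3}(a), to prove part~(a) it suffices to show that~\eqref{Property} holds whenever $|H|$ or $|G/H|$ is odd; and by Theorem~\ref{thm3}(b), since part~(b) already assumes $|H|$ is even, it again suffices to verify~\eqref{Property}. So the entire task reduces to checking the single implication ``$g^2\in H \Rightarrow (gh)^2=e$ for some $h\in H$'' in the two cases $|H|$ odd and $|G/H|$ odd.

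First I would dispose of the case $|G/H|$ odd. Fix $g\in G$ with $g^2\in H$, and pass to the quotient $G/H$, writing $\bar g = gH$. Then $\bar g^2 = H$ is the identity of $G/H$, so $\bar g$ has order $1$ or $2$. Since $|G/H|$ is odd, $G/H$ has no element of order $2$, forcing $\bar g = H$, i.e.\ $g\in H$. Now I take $h = g^{-1}\in H$, and then $(gh)^2 = e^2 = e$, so~\eqref{Property} holds. This case is essentially immediate.

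The case $|H|$ odd is the one requiring a genuine (if small) group-theoretic argument, and I expect it to be the main point of the proof. Again fix $g\in G$ with $g^2\in H$. The idea is to locate an element $h\in H$ such that $gh$ is an involution (or the identity). Consider the coset $gH$; every element of this coset has the form $gh$ with $h\in H$, and I want to show one of them squares to $e$. A clean way is to work inside the subgroup $\langle g, H\rangle$: since $g^2\in H$ and $H$ is normal, $\langle H, g\rangle / H$ is generated by $\bar g$ with $\bar g^2 = H$, so $\langle H,g\rangle$ has order $|H|$ or $2|H|$. In the former case $g\in H$ and we take $h=g^{-1}$ as before. In the latter case $K:=\langle H,g\rangle$ has order $2|H|$ with $|H|$ odd, so $|K|\equiv 2\pmod 4$; by Cauchy's theorem $K$ contains an involution $y$, and since $K/H$ has order $2$ every element of $K\setminus H$ lies in the nontrivial coset $gH$. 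As $y\notin H$ (an involution cannot lie in the odd-order normal subgroup $H$), we have $y\in gH$, say $y = gh$ with $h\in H$, and then $(gh)^2 = y^2 = e$. Thus~\eqref{Property} holds.

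Assembling these pieces, property~\eqref{Property} holds whenever $|H|$ or $|G/H|$ is odd, and Theorem~\ref{thm3}(a) yields that $H$ is a perfect code of $G$, proving part~(a). For part~(b), the hypothesis already gives $|H|$ even and $|G/H|$ odd; the odd-index case above establishes~\eqref{Property}, so Theorem~\ref{thm3}(b) gives that $H$ is a total perfect code of $G$. The only delicate step is the $|H|$-odd analysis, where the key observation is that an odd-order normal subgroup together with an element squaring into it generates a group in which Cauchy's theorem produces an involution outside $H$; everything else is routine.
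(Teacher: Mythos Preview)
Your proof is correct. The case $|G/H|$ odd is handled identically to the paper. For the case $|H|$ odd, however, your route differs from the paper's. You pass to the subgroup $K=\langle H,g\rangle$ of order $2|H|$ and invoke Cauchy's theorem to produce an involution, which must fall in the nontrivial coset $gH$. The paper instead argues directly: since $g^2\in H$ and $|H|$ is odd, the element $g^2$ has odd order, say $2k+1$; then $h:=(g^2)^k=g^{2k}\in H$ and $(gh)^2=(g^{2k+1})^2=(g^2)^{2k+1}=e$. The paper's argument is shorter and fully constructive, avoiding any appeal to Cauchy, while yours is conceptually natural (locating an involution in the right coset) and would generalize more readily to settings where one knows structural facts about $K$ rather than explicit element orders. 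Either way the reduction to Theorem~\ref{thm3} is the same.
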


\begin{proof}
Let $g$ be an element of $G$ such that $g^2\in H$. First assume that $|H|$ is odd. Then $(g^2)^{2k+1}=e$ for some nonnegative integer $k$. Taking $h=g^{2k}$, we have $h=(g^2)^k\in H$ and $(gh)^2=(g\cdot g^{2k})^2=(g^2)^{2k+1}=e$. Next assume that $|G/H|$ is odd. Then $gH$ is the identity element of $G/H$ if and only if $(gH)^2$ is. Hence $g^2\in H$ implies $g\in H$. Taking $h=g^{-1}$, we have $h\in H$ and $(gh)^2=e^2=e$.

We have proved that, under the assumption of statement (a) or (b), for any $g\in G$, $g^2\in H$ implies $(gh)^2=e$ for some $h\in H$. The corollary then follows from Theorem~\ref{thm3}.
\end{proof}

The following is an immediate consequence of Corollary~\ref{cor2}.

\begin{corollary}\label{Odd}
Let $G$ be a group of odd order.
\begin{itemize}
\item[(a)] Any normal subgroup of $G$ is a perfect code of $G$.
\item[(b)] $G$ has no subgroup total perfect code.
\end{itemize}
\end{corollary}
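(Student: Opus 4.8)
The plan is to deduce Corollary~\ref{Odd} directly from Corollary~\ref{cor2} by specializing to the case where $G$ itself has odd order, which makes the divisibility hypotheses automatic. For part (a), I would argue as follows. Let $H$ be any normal subgroup of $G$. Since $|G|$ is odd and $|H|$ divides $|G|$ by Lagrange's theorem, $|H|$ is odd. Hence the hypothesis of Corollary~\ref{cor2}(a) is satisfied (indeed $|G/H| = |G|/|H|$ is odd as well, but we only need one of the two), and therefore $H$ is a perfect code of $G$. This handles (a) immediately. Note that the statement is restricted to \emph{normal} subgroups, so no extra work is needed to reduce to that case; if one wanted the conclusion for arbitrary subgroups one would have to revisit Lemma~\ref{Transversal} directly, but that is not what is claimed here.

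For part (b), the goal is to show that $G$ has no subgroup total perfect code, and I would prove this by a parity obstruction rather than by invoking Corollary~\ref{cor2} in the forward direction. The key observation is that a total perfect code in any graph must have even cardinality, since it induces a perfect matching on itself (each vertex of the code has exactly one neighbour, and that neighbour lies in the code). This fact is already recorded in the introduction of the paper. Now suppose for contradiction that some subgroup $H$ of $G$ is a total perfect code of $G$. Then $|H|$ is even. But $|H|$ divides $|G|$, which is odd, so $|H|$ is odd --- a contradiction. Therefore no subgroup of $G$ can be a total perfect code, proving (b).

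The proof is essentially immediate once the two inputs are in place, so there is no serious obstacle to overcome; the only point requiring a moment's care is to be sure that the evenness of a total perfect code is being used correctly. I expect the cleanest write-up to present (a) as a one-line application of Corollary~\ref{cor2}(a) via $|H|$ odd, and (b) as the parity contradiction just described, without routing it through Corollary~\ref{cor2}(b) at all (since that corollary gives existence of a total perfect code under certain hypotheses, not non-existence). Alternatively one could phrase (b) as: by Theorem~\ref{thm3}(b) a subgroup total perfect code would require $|H|$ even, impossible when $|G|$ is odd; this is logically equivalent and equally short.
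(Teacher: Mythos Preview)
Your proposal is correct and essentially matches the paper, which records Corollary~\ref{Odd} as ``an immediate consequence of Corollary~\ref{cor2}'' with no further argument. For part~(a) your reasoning is identical to the paper's intent. For part~(b) you are in fact slightly more careful than the paper: as you note, Corollary~\ref{cor2}(b) is only a sufficient condition, so non-existence must come from the parity obstruction (equivalently, the ``only if'' direction of Theorem~\ref{thm3}(b)), which the paper implicitly assumes the reader will supply.
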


\subsection{Abelian groups}

We now use Theorem~\ref{thm3} to study subgroup perfect codes and subgroup total perfect codes of abelian groups.

\begin{lemma}\label{lem30}
Let $G$ be an abelian group with Sylow $2$-subgroup $P$, and let $H$ be a subgroup of $G$. Then $H$ is a (total) perfect code of $G$ if and only if $H\cap P$ is a (total) perfect code of $P$.
\end{lemma}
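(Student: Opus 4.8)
Since $G$ is abelian, every subgroup is normal, so Theorem~\ref{thm3} applies both to $H\le G$ and to $H\cap P\le P$. Under that theorem, being a perfect code is equivalent to property~\eqref{Property}, and being a total perfect code is equivalent to~\eqref{Property} together with the relevant group having even order. The plan is therefore to reduce the lemma to two parallel facts: that~\eqref{Property} holds for the pair $(G,H)$ if and only if it holds for $(P,H\cap P)$, and that $|H|$ is even if and only if $|H\cap P|$ is even.

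To set this up, I would write $G=P\times Q$, where $Q$ is the Hall $2'$-subgroup of $G$ (the product of the odd-order Sylow subgroups), so that $|Q|$ is odd and $P\cap Q=\{e\}$. Because $P$ and $Q$ have coprime orders, the subgroup $H$ splits as $H=(H\cap P)\times(H\cap Q)$. The evenness claim is then immediate, since $|H|=|H\cap P|\,|H\cap Q|$ and $|H\cap Q|$ is odd. The crucial auxiliary observation concerns the odd part: in any group of odd order the squaring map is a bijection, and applying this to $Q$ and to the quotient $Q/(H\cap Q)$ shows that for $q\in Q$ one has $q^2\in H\cap Q$ if and only if $q\in H\cap Q$. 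In particular the $Q$-component can never obstruct~\eqref{Property}: whenever $q^2\in H\cap Q$, the element $b=q^{-1}$ lies in $H\cap Q$ and satisfies $(qb)^2=e$.

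With these pieces in hand I would verify the equivalence of~\eqref{Property} for the two pairs by decomposing every element. Writing $g=pq$ with $p\in P$ and $q\in Q$, and a candidate $h=ab$ with $a\in H\cap P$ and $b\in H\cap Q$, one has $g^2\in H$ exactly when $p^2\in H\cap P$ and $q^2\in H\cap Q$, while $(gh)^2=e$ exactly when $(pa)^2=e$ and $(qb)^2=e$. For the forward direction I would feed the special element $g=p$ (that is, $q=e$) into~\eqref{Property} for $(G,H)$ to extract the required $a\in H\cap P$ with $(pa)^2=e$, giving~\eqref{Property} for $(P,H\cap P)$. For the converse, given $g=pq$ with $g^2\in H$, I would use~\eqref{Property} for $(P,H\cap P)$ to solve the $P$-component and the odd-part observation above to take $b=q^{-1}$ for the $Q$-component, assembling $h=ab\in H$ with $(gh)^2=e$.

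The main obstacle is entirely concentrated in controlling the odd part $Q$: one must check that the condition $g^2\in H$ and the existence of a suitable $h$ both decouple across the direct factors, and that the $Q$-factor is always solvable. Once the bijectivity of squaring on odd-order groups is invoked, these checks are routine, and the lemma follows by combining the equivalence of~\eqref{Property} with the matching of evenness through Theorem~\ref{thm3}.
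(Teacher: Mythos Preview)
Your proposal is correct and follows essentially the same route as the paper: reduce via Theorem~\ref{thm3} to the equivalence of~\eqref{Property} for $(G,H)$ and $(P,H\cap P)$, use the decomposition $G=P\times Q$ and $H=(H\cap P)\times(H\cap Q)$, and handle the $Q$-component via bijectivity of squaring on odd-order groups. The paper's forward direction is phrased slightly differently (it observes directly that $h^2=g^{-2}\in P$ forces $h\in P$), but this is exactly your observation that the $Q$-component $b$ of $h$ must satisfy $b^2=e$ and hence $b=e$.
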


\begin{proof}
Since $|H|$ is even if and only if $|H\cap P|$ is even, by Theorem~\ref{thm3}, we only need to prove that~\eqref{Property} holds if and only if the following holds:
\begin{equation}\label{eq30}
\forall g\in P\ (g^2\in H\cap P)\ \exists h\in H\cap P\ ((gh)^2=e).
\end{equation}

First suppose that~\eqref{Property} holds. Then for any $g\in P$ with $g^2\in H\cap P$, there exists $h\in H$ such that $(gh)^2=e$. Since $h^2=g^{-2}\in P$, it follows that $h\in P$, and so we have $h\in H\cap P$ with $(gh)^2=e$. This shows that~\eqref{eq30} holds.

Next suppose that~\eqref{eq30} holds. Let $Q$ be the Hall $2'$-subgroup of $G$. Clearly, $G=P\times Q$ and $H=(H\cap P)\times(H\cap Q)$. Now for any $g\in G$ such that $g^2\in H$, write $g=g_1g_2$ with $g_1\in P$ and $g_2\in Q$. Then $g_1^2g_2^2=g^2\in H$ implies that $g_1^2\in H\cap P$ and $g_2^2\in H\cap Q$. Since $g_1^2\in H\cap P$, we derive from~\eqref{Property} that there exists $h_1\in H\cap P$ with $(g_1h_1)^2=e$. Since $g_2^2\in H\cap Q$ and $|H\cap Q|$ is odd, we deduce that $g_2\in H\cap Q$. Hence $h:=h_1g_2^{-1}$ is an element of $H$ such that $(gh)^2=(g_1g_2h_1g_2^{-1})^2=(g_1h_1)^2=e$. This shows that~\eqref{Property} holds.
\end{proof}

\begin{lemma}\label{lem31}
Let $G$ be an abelian group with Sylow $2$-subgroup $P=\langle a_1\rangle\times\dots\times\langle a_n\rangle$, where $a_i$ has order $2^{m_i}>1$ for $1\leq i\leq n$. Let $H$ be a subgroup of $G$.
\begin{itemize}
\item[(a)] If $H$ is a perfect code of $G$, then either $H\cap P$ is trivial or $H\cap P$ projects onto at least one of $\langle a_1\rangle,\dots,\langle a_n\rangle$.
\item[(b)] If $H$ is a total perfect code of $G$, then $H\cap P$ projects onto at least one of $\langle a_1\rangle,\dots,\langle a_n\rangle$.
\end{itemize}
\end{lemma}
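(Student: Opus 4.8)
The plan is to reduce everything to the Sylow $2$-subgroup $P$ and then reinterpret the arithmetic condition~\eqref{Property} as a statement about the squaring map on the abelian $2$-group $P$. Write $K=H\cap P$. By Lemma~\ref{lem30}, $H$ is a (total) perfect code of $G$ if and only if $K$ is a (total) perfect code of $P$, so it suffices to work inside $P$ with the subgroup $K$ and to apply Theorem~\ref{thm3} to the pair $(P,K)$.

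First I would translate~\eqref{Property} into a clean form. Since $P$ is abelian, $(gh)^2=g^2h^2$, so the existence of $h\in K$ with $(gh)^2=e$ is equivalent to the existence of $h\in K$ with $h^2=(g^2)^{-1}$; replacing $h$ by $h^{-1}\in K$, this is the same as asking for $h\in K$ with $h^2=g^2$. Setting $P^2=\{g^2\mid g\in P\}$ and $K^2=\{h^2\mid h\in K\}$ (both subgroups, as $P$ is abelian), the hypothesis that $g$ ranges over elements with $g^2\in K$ means that $g^2$ ranges over $P^2\cap K$. Hence, by Theorem~\ref{thm3}(a) together with abelianness, $K$ is a perfect code of $P$ if and only if $P^2\cap K\subseteq K^2$; and since $K^2\subseteq P^2\cap K$ holds trivially, this says precisely $P^2\cap K=K^2$. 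For part (b), the same identity holds together with $|K|$ being even.

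Next I would capture the failure of the projection condition. Each projection $\pi_i\colon P\to\langle a_i\rangle$ is a homomorphism, and $\pi_i(K)$ is a subgroup of the cyclic group $\langle a_i\rangle$ of order $2^{m_i}$; thus $\pi_i(K)$ is either all of $\langle a_i\rangle$ or contained in its unique maximal subgroup $\langle a_i^2\rangle$. Consequently, $K$ fails to project onto every one of $\langle a_1\rangle,\dots,\langle a_n\rangle$ exactly when $\pi_i(K)\subseteq\langle a_i^2\rangle$ for all $i$, which (since squaring respects the direct decomposition, giving $P^2=\langle a_1^2\rangle\times\dots\times\langle a_n^2\rangle$) amounts to $K\subseteq P^2$.

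Finally I would combine the two reformulations. Assume $H$ is a perfect code and $K$ projects onto none of the factors, so $K\subseteq P^2$ and hence $P^2\cap K=K$; the criterion of the second paragraph then forces $K=K^2$. In a nontrivial finite abelian $2$-group the squaring map has nontrivial kernel (any involution lies in it, and one exists by Cauchy), so $|K^2|<|K|$ unless $K$ is trivial; therefore $K=K^2$ yields $K=\{e\}$, which proves (a). For (b), the additional hypothesis that $|K|=|H\cap P|$ is even rules out $K=\{e\}$, so $K$ must project onto at least one factor. The only genuinely delicate step is the reinterpretation of~\eqref{Property} in the second paragraph; once the criterion $P^2\cap K=K^2$ is in hand, the remaining arguments are elementary structural facts about finite abelian $2$-groups.
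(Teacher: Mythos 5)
Your proof is correct. The reduction to the Sylow $2$-subgroup via Lemma~\ref{lem30} and the appeal to Theorem~\ref{thm3} are exactly the paper's opening moves, and your reformulation of~\eqref{Property} for an abelian $2$-group as $P^2\cap K=K^2$ is sound (in the abelian case $(gh)^2=e$ with $h\in K$ is indeed the same as $g^2\in K^2$). Where you diverge is in the core of the argument: the paper argues by contradiction with an explicit element, choosing an index $i$ with $\pi_i(K)=\langle a_i^{2^\ell}\rangle$ nontrivial but proper, exhibiting a concrete $g=a_1^{2^{\ell-1}}a_2^{s_2/2}\cdots a_n^{s_n/2}$ whose square lies in $K$, and checking in the first coordinate that no $h\in K$ can satisfy $h^{-2}=g^2$ because squares of elements of $\langle a_1^{2^\ell}\rangle$ lie in $\langle a_1^{2^{\ell+1}}\rangle$. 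You instead observe globally that failure of every projection is equivalent to $K\subseteq P^2$, whence $K=P^2\cap K=K^2$, and then invoke the non-injectivity of the squaring endomorphism on a nontrivial $2$-group to force $K=\{e\}$. Your version is coordinate-free and arguably cleaner, since it avoids choosing generators of the projections and the parity bookkeeping with the exponents $s_2,\dots,s_n$; the paper's version is more computational but makes the obstruction visible in a single coordinate. Both handle part (b) identically, by noting that evenness of $|H\cap P|$ excludes the trivial case.
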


\begin{proof}
By virtue of Lemma~\ref{lem30}, we may assume $G=P$. Then $G=\langle a_1\rangle\times\dots\times\langle a_n\rangle$. The following proof is valid for both (a) and (b).

By Theorem~\ref{thm3}, in order to prove (a) and (b) it suffices to prove that if~\eqref{Property} holds and $|H| > 1$, then $H$ projects onto at least one of $\langle a_1\rangle,\dots,\langle a_n\rangle$. Denote by $\pi_i$ the projection of $G$ onto $\langle a_i\rangle$ for $1\leq i\leq n$. Suppose that~\eqref{Property} holds and $|H|>1$. Suppose for a contradiction that $\pi_i(H)\neq\langle a_i\rangle$ for all $1\leq i\leq n$. Since $|H|>1$, at least one of $\pi_1(H),\dots,\pi_n(H)$, say $\pi_1(H)$, is nontrivial. This means that $\pi(H)=\langle a_1^{2^\ell}\rangle$ with $0<\ell<m_1$. Consequently, $a_1^{2^\ell}a_2^{s_2}\dots a_n^{s_n}\in H$ for some integers $s_2,\dots,s_n$. Since $\pi_i(H)\neq\langle a_i\rangle$ for $2\leq i\leq n$, the integers $s_2,\dots,s_n$ are all even. Take $g=a_1^{2^{\ell-1}}a_2^{s_2/2}\cdots a_n^{s_n/2}\in G$. Then $g^2=a_1^{2^\ell}a_2^{s_2}\cdots a_n^{s_n}\in H$, and hence by~\eqref{Property} there exists $h\in H$ such that $h^{-2}=g^2$. However, this implies that $h^{-1}\in H$ and $(\pi_1(h^{-1}))^2=\pi_1(g^2)=a_1^{2^\ell}$, which is impossible as $\pi_1(h^{-1})\in\pi_1(H)=\langle a_1^{2^\ell}\rangle$ and $\ell<m_1$.
\end{proof}

The condition in Lemma \ref{lem31} is not sufficient for $H$ to be a perfect code or total perfect code of $G$. For example, if $G=\langle a_1\rangle\times\langle a_2\rangle\times\langle a_3\rangle$ with $a_1$, $a_2$ and $a_3$ of order $2$, $4$ and $4$, respectively, and $H=\langle a_1a_2^2\rangle\times\langle a_1a_3^2\rangle$, then $(a_2 a_3)^2=a_2^2 a_3^2\in H$ while $a_2^2 a_3^2$ is not a square of any element of $H$. However, the condition in Lemma~\ref{lem31} is sufficient if $H\cap P$ is cyclic, as shown in the next theorem.

\begin{theorem}\label{Abelian}
Let $G$ be an abelian group with Sylow $2$-subgroup $P=\langle a_1\rangle\times\dots\times\langle a_n\rangle$, where $a_i$ has order $2^{m_i}>1$ for $1\leq i\leq n$. Suppose that $H$ is a subgroup of $G$ such that $H\cap P$ is cyclic.
\begin{itemize}
\item[(a)] $H$ is a perfect code of $G$ if and only if either $H\cap P$ is trivial or $H\cap P$ projects onto at least one of $\langle a_1\rangle,\dots,\langle a_n\rangle$.
\item[(b)] $H$ is a total perfect code of $G$ if and only if $H\cap P$ projects onto at least one of $\langle a_1\rangle,\dots,\langle a_n\rangle$.
\end{itemize}
\end{theorem}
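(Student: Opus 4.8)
The plan is to reduce everything to the Sylow $2$-subgroup and then translate property~\eqref{Property} into a clean statement about squares. The ``only if'' directions of both (a) and (b) are already supplied by Lemma~\ref{lem31}, so only the ``if'' directions remain. By Lemma~\ref{lem30}, $H$ is a (total) perfect code of $G$ if and only if $K:=H\cap P$ is a (total) perfect code of $P$, and the hypotheses and conclusions of the theorem all refer only to $K$ and $P$; hence I would at once reduce to the case $G=P$ with $K$ a cyclic $2$-subgroup, so that it suffices to verify~\eqref{Property} for $K$ in $P$ (together with the parity of $|K|$ for part (b), which is automatic once $K$ is nontrivial, since $K\leq P$).

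The key observation is that in the abelian $2$-group $P$ one has $(gh)^2=g^2h^2$, and that the set of squares is exactly the subgroup $P^2=\langle a_1^2\rangle\times\dots\times\langle a_n^2\rangle$ (an element $a_1^{c_1}\cdots a_n^{c_n}$ is a square precisely when every $c_i$ is even, because $m_i\geq1$). Writing out~\eqref{Property}: given $g$ with $g^2\in K$ we need $h\in K$ with $g^2h^2=e$, i.e.\ $h^2=g^{-2}$; and as $g$ ranges over all elements with $g^2\in K$, the elements $g^{-2}$ range over exactly $K\cap P^2$. Thus~\eqref{Property} for $K$ is equivalent to the inclusion $K\cap P^2\subseteq K^2$, and since $K^2\subseteq K\cap P^2$ holds automatically, it is equivalent to the single identity $K\cap P^2=K^2$. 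This reformulation is the heart of the argument.

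It then remains to verify $K\cap P^2=K^2$ under the projection hypothesis. Writing $K=\langle b\rangle$ with $b=a_1^{c_1}\cdots a_n^{c_n}$, the assumption that $K$ projects onto some $\langle a_i\rangle$, say $\langle a_1\rangle$, means $c_1$ is coprime to $2^{m_1}$, i.e.\ $c_1$ is odd. I would then show that $b^j\in P^2$ forces $jc_1$ to be even, whence (as $c_1$ is odd) $j$ is even, and conversely that $j$ even gives $b^j=(b^2)^{j/2}\in K^2$; this yields $K\cap P^2=\langle b^2\rangle=K^2$, so~\eqref{Property} holds. The trivial case $K=\{e\}$ makes~\eqref{Property} hold vacuously and disposes of the remaining alternative in (a). Assembling these facts with Theorem~\ref{thm3} gives both ``if'' directions.

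The main obstacle, and the point where cyclicity is indispensable, is precisely the step $K\cap P^2=K^2$: for a general non-cyclic $K$ an element can be a square in $P$ without being a square in $K$, exactly as in the example preceding the theorem, so the argument ``$b^j$ a square in $P$ forces $j$ even'' genuinely relies on $K$ being generated by a single element having an odd coordinate. Everything else is routine bookkeeping with the direct-product decomposition of $P$.
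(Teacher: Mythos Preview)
Your proof is correct and follows essentially the same route as the paper's: reduce to $G=P$ via Lemma~\ref{lem30}, invoke Lemma~\ref{lem31} for the ``only if'' direction, and verify~\eqref{Property} directly using the projection to the coordinate where $K$ surjects. Your intermediate reformulation $K\cap P^2=K^2$ is a clean way to phrase things, but the verification of that identity (odd exponent in one coordinate forces the power $j$ to be even) is exactly the argument the paper gives; the paper simply chooses a generator with first exponent equal to $1$ rather than merely odd, which is cosmetically simpler but equivalent.
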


\begin{proof}
By Lemma~\ref{lem30}, we may assume $G=P$. Then $G=\langle a_1\rangle\times\dots\times\langle a_n\rangle$ and $H$ is cyclic. By virtue of Lemma~\ref{lem31} and Theorem~\ref{thm3}, we only need to prove that if either $H$ is trivial or $H$ projects onto at least one of $\langle a_1\rangle,\dots,\langle a_n\rangle$ then~\eqref{Property} holds. If $|H|=1$, then~\eqref{Property} holds trivially. Now assume that $H$ projects onto at least one of $\langle a_1\rangle,\dots,\langle a_n\rangle$, say $\langle a_1\rangle$. Then $H=\langle a_1a_2^{s_2}\cdots a_n^{s_n}\rangle$ for some integers $s_2,\dots,s_n$. Denote by $\pi_1$ the projection of $G$ onto $\langle a_1\rangle$. Let $g$ be an element of $G$ such that $g^2\in H$, and write $g^2=(a_1a_2^{s_2}\cdots a_n^{s_n})^t$ for some integer $t$. If $t$ is odd, then we obtain a contradiction that $(\pi_1(g))^2=\pi_1(g^2)=a_1^t$ as the order of $a_1$ is even. Hence $t$ is even. Taking $h=(a_1a_2^{s_2}\cdots a_n^{s_n})^{-t/2}$, we have $(gh)^2=e$. This implies that ~\eqref{Property} holds, completing the proof.
\end{proof}

\begin{corollary}\label{cor3}
Let $G$ be a cyclic group and $H$ a subgroup of $G$.
\begin{itemize}
\item[(a)] $H$ is a perfect code of $G$ if and only if either $|H|$ or $|G/H|$ is odd.
\item[(b)] $H$ is a total perfect code of $G$ if and only if $|H|$ is even and $|G/H|$ is odd.
\end{itemize}
\end{corollary}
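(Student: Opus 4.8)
The plan is to derive Corollary~\ref{cor3} directly from Theorem~\ref{Abelian} by specializing to the case where $G$ is cyclic. When $G$ is cyclic, its Sylow $2$-subgroup $P$ is cyclic, say $P=\langle a\rangle$ with $a$ of order $2^m$, so in the notation of Theorem~\ref{Abelian} we have $n=1$ and $P=\langle a_1\rangle$. Since $H\cap P$ is automatically cyclic (every subgroup of a cyclic group is cyclic), the hypothesis of Theorem~\ref{Abelian} is satisfied, and moreover ``projecting onto at least one of $\langle a_1\rangle,\dots,\langle a_n\rangle$'' reduces simply to ``$H\cap P$ projects onto $\langle a_1\rangle=P$'', i.e.\ $\pi_1(H\cap P)=P$.

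The main step is therefore to translate the two conditions of Theorem~\ref{Abelian} into statements about the parities of $|H|$ and $|G/H|$. First I would show that, in the cyclic setting, $H\cap P$ being trivial is equivalent to $|H|$ being odd: indeed $|H\cap P|$ is the $2$-part of $|H|$, so $H\cap P=\{e\}$ iff $2\nmid|H|$. Next I would show that $H\cap P$ projecting onto $P$ is equivalent to $|G/H|$ being odd. For this, observe that $G=P\times Q$ with $Q$ the (odd-order) Hall $2'$-subgroup, and $H=(H\cap P)\times(H\cap Q)$, so $|G/H|=|P/(H\cap P)|\cdot|Q/(H\cap Q)|$; the second factor is odd, hence $|G/H|$ is odd iff $|P/(H\cap P)|$ is odd iff $|P/(H\cap P)|=1$ (as $P$ is a $2$-group) iff $H\cap P=P$. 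Finally, in the cyclic case $H\cap P$ is a subgroup of the cyclic group $P=\langle a_1\rangle$, so its image $\pi_1(H\cap P)$ equals $H\cap P$ itself, and thus $H\cap P$ projects onto $\langle a_1\rangle$ exactly when $H\cap P=P$, i.e.\ exactly when $|G/H|$ is odd.

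Assembling these equivalences yields the corollary. For part~(a), Theorem~\ref{Abelian}(a) says $H$ is a perfect code of $G$ iff $H\cap P$ is trivial or $H\cap P$ projects onto $\langle a_1\rangle$, which by the translations above is iff $|H|$ is odd or $|G/H|$ is odd. For part~(b), Theorem~\ref{Abelian}(b) says $H$ is a total perfect code iff $H\cap P$ projects onto $\langle a_1\rangle$, i.e.\ iff $|G/H|$ is odd; and a total perfect code additionally forces $|H|$ to be even (equivalently $H\cap P$ nontrivial, which is automatic once $H\cap P=P\neq\{e\}$ provided $P\neq\{e\}$). I expect the only real subtlety to be bookkeeping in the degenerate case where $G$ itself has odd order so that $P=\{e\}$ is trivial: then part~(a) reads correctly since $|H|$ is odd, and part~(b) correctly reports no subgroup total perfect code, consistent with Corollary~\ref{Odd}. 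Handling this boundary case cleanly, and making sure the phrase ``projects onto $\langle a_1\rangle$'' is interpreted sensibly when there are no factors, will be the main thing to state carefully rather than a genuine mathematical obstacle.
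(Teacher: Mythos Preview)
Your proposal is correct and follows essentially the same approach as the paper's proof: both apply Theorem~\ref{Abelian} with $n=1$ and translate ``$H\cap P$ trivial'' and ``$H\cap P$ projects onto $\langle a_1\rangle$'' into the parity conditions on $|H|$ and $|G/H|$. You are in fact more careful than the paper, which simply invokes Theorem~\ref{Abelian} without commenting on the degenerate case $P=\{e\}$ (where, strictly speaking, the hypothesis $2^{m_i}>1$ of Theorem~\ref{Abelian} fails); your appeal to Corollary~\ref{Odd} in that case is the right patch.
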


\begin{proof}
Let $P$ be the Sylow $2$-subgroup of $G$. By Theorem~\ref{Abelian}, $H$ is a perfect code of $G$ if and only if either $|H\cap P|=1$ or $H\cap P=P$. Note that $|H\cap P|=1$ if and only if $|H|$ is odd, and $H\cap P=P$ if and only if $|G/H|$ is odd. We see that part~(a) is true. Similarly, part~(b) is true.
\end{proof}

\subsection{Dihedral groups}

To understand tilings of non-abelian groups and in particular perfect codes in Cayley graphs of non-abelian groups, it is natural to start with dihedral groups. With this in mind in this subsection we determine the subgroup perfect codes and subgroup total perfect codes of dihedral groups. Let $\D_{2n}=\langle a,b\mid a^n=b^2=(ab)^2=e\rangle$ be the dihedral group of order $2n$, where $n\geq3$. The subgroups of $\D_{2n}$ are the cyclic groups $\langle a^t\rangle$ with $t$ dividing $n$ and the dihedral groups $\langle a^t,a^sb\rangle$ with $t$ dividing $n$ and $0\leq s\leq t-1$.

\begin{lemma}\label{lem32}
Let $\D_{2n} = \langle a,b\mid a^n=b^2=(ab)^2=e\rangle$ and $H=\langle a^t\rangle$ with $t$ dividing $n$.
\begin{itemize}
\item[(a)] $H$ is a perfect code of $\D_{2n}$ if and only if either $t$ or $n/t$ is odd.
\item[(b)] $H$ is a total perfect code of $\D_{2n}$ if and only if $t$ is odd and $n/t$ is even.
\end{itemize}
\end{lemma}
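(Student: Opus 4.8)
The plan is to exploit the fact that $H=\langle a^t\rangle$ is contained in the cyclic normal subgroup $\langle a\rangle$ of index $2$, so that $H$ is itself normal in $\D_{2n}$ (indeed $ba^tb^{-1}=a^{-t}\in H$), with $|H|=n/t$. Thus Theorem~\ref{thm3} applies: $H$ is a perfect code of $\D_{2n}$ if and only if~\eqref{Property} holds, and $H$ is a total perfect code if and only if in addition $n/t$ is even. Everything therefore reduces to deciding exactly when~\eqref{Property} holds for $H$.

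Next I would split the verification of~\eqref{Property} according to the two kinds of elements of $\D_{2n}$. Every reflection $a^ib$ satisfies $(a^ib)^2=e\in H$, and taking $h=e$ gives $(a^ib\cdot h)^2=e$, so reflections never obstruct~\eqref{Property}; hence only rotations $g=a^i$ matter. Here $g^2=a^{2i}\in H$ precisely when $t\mid 2i$, and since every $h\in H$ has the form $a^{tk}$, the requirement $(gh)^2=e$ becomes $a^{2(i+tk)}=e$, i.e.\ the linear congruence $2tk\equiv-2i\pmod n$ must be solvable in $k$. By the standard solvability criterion this holds if and only if $\gcd(2t,n)\mid 2i$. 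Consequently~\eqref{Property} is equivalent to the purely arithmetic statement that $\gcd(2t,n)\mid 2i$ for every integer $i$ with $t\mid 2i$.

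The remaining work is a number-theoretic case analysis, which I expect to be the main (though not deep) obstacle, since one must track $2$-adic valuations carefully and translate the divisibility condition into the stated parities. If $t$ is odd, then $t\mid 2i$ forces $t\mid i$, so the relevant values $2i$ are exactly the multiples of $2t$, each divisible by $\gcd(2t,n)$; thus~\eqref{Property} holds automatically. If $t$ is even, then $t\mid 2i$ amounts to $(t/2)\mid i$, so the values $2i$ range over all multiples of $t$, and the condition becomes $\gcd(2t,n)\mid t$. Comparing $2$-adic valuations (the only prime at which $\gcd(2t,n)$ can exceed the corresponding part of $t$ is $2$) shows that $\gcd(2t,n)\mid t$ if and only if $v_2(n)\le v_2(t)$; since $t\mid n$ already gives $v_2(t)\le v_2(n)$, this is equivalent to $v_2(n)=v_2(t)$, that is, to $n/t$ being odd.

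Finally I would assemble the conclusions. Putting the two cases together, \eqref{Property} holds if and only if $t$ is odd or $n/t$ is odd, which is statement~(a) via Theorem~\ref{thm3}(a). For~(b), Theorem~\ref{thm3}(b) requires in addition that $|H|=n/t$ be even; combined with the characterization of~\eqref{Property}, the subcase ``$t$ even'' would force $n/t$ odd and is therefore excluded, leaving exactly the case $t$ odd and $n/t$ even, as claimed.
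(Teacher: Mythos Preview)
Your proof is correct and, like the paper's, reduces both parts to Theorem~\ref{thm3} via the observation that $H=\langle a^t\rangle$ is normal in $\D_{2n}$, so the only issue is characterising when~\eqref{Property} holds. The difference lies in how that characterisation is carried out. You translate~\eqref{Property} for rotations into the solvability of the linear congruence $2tk\equiv -2i\pmod n$, obtain the criterion $\gcd(2t,n)\mid 2i$, and then finish with a $2$-adic valuation analysis. The paper instead argues directly: if both $t$ and $n/t$ are even it exhibits the explicit witness $g=a^{t/2}$ violating~\eqref{Property} (since the only possible $gh$ with $(gh)^2=e$ in $\langle a\rangle$ are $e$ and $a^{n/2}$, both in $H$, forcing $g\in H$); conversely, if $t$ or $n/t$ is odd it shows that any rotation $g$ with $g^2\in H$ already lies in $H$, so $h=g^{-1}$ works. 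The paper's route is shorter and avoids any congruence machinery; your route is more systematic and makes transparent exactly which arithmetic obstruction is at play, which could be an advantage if one wanted to adapt the argument to other cyclic-by-$2$ situations.
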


\begin{proof}
Since $H$ is normal in $\D_{2n}$, by Theorem~\ref{thm3} we only need to prove that~\eqref{Property} holds if and only if either $t$ or $n/t$ is odd.

First suppose that~\eqref{Property} holds and both $t$ and $n/t$ are even. Take $g=a^{t/2}$. Then $g\notin H$ and $g^2\in H$. Hence \eqref{Property} asserts that there exists $h\in H$ with $(gh)^2=e$. Since both $g$ and $h$ are in $\langle a\rangle$, we have $gh\in H$. Then the equality $(gh)^2=e$ implies that $gh=a^{n/2}$. However, $n/2$ is divisible by $t$ since $n/t$ is even, and so $a^{n/2}\in H$. It follows that $g=a^{n/2}h^{-1}\in H$, a contradiction. Thus \eqref{Property} implies that either $t$ or $n/t$ is odd.

Next suppose that either $t$ or $n/t$ is odd. Let $g$ be an element of $\D_{2n}$ such that $g^2\in H$. If $g\notin \langle a \rangle$, then $g^2=e$ and taking $h=e\in H$ we have $(gh)^2=e$. Now assume $g\in \langle a \rangle$, which means that $g=a^k$ for some integer $k$. If $t$ is odd, then $a^{2k}\in\langle a^t\rangle$ implies $a^k\in\langle a^t\rangle$ and so $g\in H$. If $n/t$ is odd, then $|H|$ is odd and so $g^2\in H$ implies $g\in H$. Hence we always have $(gh)^2=e$ with $h:=g^{-1}\in H$. This shows that~\eqref{Property} holds.
\end{proof}

In order to prove our next result (Theorem \ref{thm:dih}) and two other results in the next section, we will use the following lemma that gives equivalent definitions of perfect codes and total perfect codes in Cayley graphs using group ring $\mathbb{Z}[G]$ or tilings of $G$. For a group $G$ and a subset $A$ of $G$, denote
$$
\overline{A}=\sum_{g\in G}\mu_A(g)g\in\mathbb{Z}[G],
$$
where
\begin{equation}
\label{eq:mu}
\mu_A(g) = \begin{cases}
1,\; & g\in A; \\
0,\; & g\in G\setminus A.
\end{cases}
\end{equation}

\begin{lemma}\label{thm1}
Let $G$ be a group and $\Ga=\Cay(G,S)$ a Cayley graph of $G$. Let $C$ be a subset of $G$.
\begin{itemize}
\item[(a)] $C$ is a perfect code in $\Ga$ if and only if $\overline{S\cup\{e\}}\cdot\overline{C}=\overline{G}$.
\item[(b)] $C$ is a total perfect code in $\Ga$ if and only if $\overline{S}\cdot\overline{C}=\overline{G}$.
\end{itemize}
\end{lemma}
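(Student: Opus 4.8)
The plan is to compute the coefficients of the group-ring product $\overline{A}\cdot\overline{B}$ and read off their combinatorial meaning. The basic fact I would use is the multiplication rule in $\mathbb{Z}[G]$: since $\overline{A}=\sum_{a\in A}a$ and $\overline{B}=\sum_{b\in B}b$, we have $\overline{A}\cdot\overline{B}=\sum_{a\in A,\,b\in B}ab$, so for each $g\in G$ the coefficient of $g$ in $\overline{A}\cdot\overline{B}$ equals the number of pairs $(a,b)\in A\times B$ with $ab=g$. On the other hand $\overline{G}=\sum_{g\in G}g$ has every coefficient equal to $1$. Hence $\overline{A}\cdot\overline{B}=\overline{G}$ holds if and only if every element of $G$ has a \emph{unique} factorization $g=ab$ with $a\in A$ and $b\in B$.

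Next I would translate the coefficient count into a neighbour count in $\Ga$. In $\Cay(G,S)$ two vertices $x,y$ are adjacent exactly when $yx^{-1}\in S$, so the neighbourhood of $x$ is $N(x)=Sx=\{sx\mid s\in S\}$. Consequently $|N(x)\cap C|$ equals the number of $c\in C$ with $cx^{-1}\in S$, i.e. the number of pairs $(s,c)\in S\times C$ with $sx=c$, equivalently with $x=s^{-1}c$. The task is to match this with the coefficient of $x$ in $\overline{S}\cdot\overline{C}$, which by the first paragraph is the number of pairs $(s,c)$ with $sc=x$. Here I would invoke the standing hypothesis $S^{-1}=S$: the substitution $s\mapsto s^{-1}$ is a bijection of $S$, and it turns each factorization $x=sc$ into a factorization $x=s^{-1}c$, so the two counts coincide. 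Thus the coefficient of $x$ in $\overline{S}\cdot\overline{C}$ is exactly $|N(x)\cap C|$ for every $x\in G$.

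With this identity in hand both parts follow at once. For part (b), $\overline{S}\cdot\overline{C}=\overline{G}$ says every coefficient equals $1$, i.e. $|N(x)\cap C|=1$ for all $x\in G$, which is precisely the definition of a total perfect code. For part (a) I would rerun the argument with $S$ replaced by $S\cup\{e\}$; this set is still inverse-closed because $e^{-1}=e$, and $(S\cup\{e\})x$ is now the \emph{closed} neighbourhood $N[x]=\{x\}\cup N(x)$. The same reasoning gives that the coefficient of $x$ in $\overline{S\cup\{e\}}\cdot\overline{C}$ equals $|N[x]\cap C|$, so $\overline{S\cup\{e\}}\cdot\overline{C}=\overline{G}$ if and only if every vertex lies at distance at most $1$ from exactly one element of $C$, i.e. $C$ is a perfect code.

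The computation is short and essentially bookkeeping; the one step I would single out as the crux is the reindexing in the second paragraph, where inverse-closedness $S^{-1}=S$ is exactly what reconciles the description $N(x)=Sx$ coming from the adjacency rule $yx^{-1}\in S$ with the order of multiplication $\overline{S}\cdot\overline{C}$ appearing in the statement. Getting this alignment right---rather than ending up with $\overline{C}\cdot\overline{S}$ or with $\overline{S^{-1}}\cdot\overline{C}$---is the whole content of the proof.
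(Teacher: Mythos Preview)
Your argument is correct. The paper does not supply a proof of this lemma at all; it is stated as an immediate consequence of the definitions, and your write-up is precisely the routine unpacking the authors leave to the reader. One small remark: you could shorten the second paragraph by using the symmetry of adjacency directly---since $x$ is adjacent to $c$ iff $xc^{-1}\in S$, the number of neighbours of $x$ in $C$ is already the number of pairs $(s,c)\in S\times C$ with $sc=x$, without any reindexing---but this is the same use of $S^{-1}=S$ in a slightly different guise, and what you wrote is fine.
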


Note that when $e \in C$ the condition $\overline{S\cup\{e\}}\cdot\overline{C}=\overline{G}$ above is equivalent to saying that $(S \cup \{e\}, C)$ is a tiling of $G$, and that the assumption $e \in C$ does not sacrifice generality since for any perfect code $C$ and $g \in G$, $Cg$ is also a perfect code.

\begin{theorem}
\label{thm:dih}
Let $\D_{2n}=\langle a,b\mid a^n=b^2=(ab)^2=e\rangle$, and let $H$ be a proper subgroup of $\D_{2n}$.
\begin{itemize}
\item[(a)] $H$ is a perfect code of $\D_{2n}$ if and only if either $H\nleq\langle a\rangle$, or $H\leq\langle a\rangle$ with at least one of $|H|$ and $n/|H|$ odd.
\item[(b)] $H$ is a total perfect code of $\D_{2n}$ if and only if either $H\nleq\langle a\rangle$, or $H\leq\langle a\rangle$ with $|H|$ even and $n/|H|$ odd.
\end{itemize}
\end{theorem}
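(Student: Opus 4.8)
The plan is to split the analysis according to whether $H$ lies inside the cyclic subgroup $\langle a\rangle$ or not, since the case $H\leq\langle a\rangle$ is already settled by Lemma~\ref{lem32}. Indeed, a proper cyclic subgroup of $\D_{2n}$ contained in $\langle a\rangle$ is exactly $\langle a^t\rangle$ for some $t>1$ dividing $n$, with $|H|=n/t$ and $n/|H|=t$, so parts~(a) and~(b) of the theorem restricted to this case translate verbatim into the statements of Lemma~\ref{lem32} (after swapping the roles of $t$ and $n/t$). Thus the entire content to be proved is that if $H\nleq\langle a\rangle$, then $H$ is always both a perfect code and a total perfect code of $\D_{2n}$.

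So the heart of the argument is the case $H\nleq\langle a\rangle$, where $H=\langle a^t,a^sb\rangle$ is itself a dihedral (or order-$2$) subgroup of order $2(n/t)$. Here $H$ is generally \emph{not} normal in $\D_{2n}$, so Theorem~\ref{thm3} does not apply and I cannot use property~\eqref{Property}. Instead I would work directly from Lemma~\ref{Transversal}: I must exhibit an inverse-closed left transversal of $H$ in $G$ (together with the origin removed, for the perfect-code version), respectively an inverse-closed left transversal for the total-perfect-code version. Equivalently, by Lemma~\ref{thm1}, I would construct an inverse-closed tiling mate $\overline{S\cup\{e\}}$ or $\overline{S}$. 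The key structural observation is that $H$ contains the reflection $a^sb$, which is an involution; the left cosets of $H$ are naturally indexed by the $t$ cosets of $\langle a^t\rangle$ inside $\langle a\rangle$, and because $H$ already absorbs all the reflections of the form $a^{s+jt}b$, I have considerable freedom to pick a self-paired set of coset representatives drawn from powers of $a$.

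Concretely, I would choose the transversal from within $\langle a\rangle$: one representative from each coset of $H$, taken among $\{a^0,a^1,\dots,a^{t-1}\}$ or suitable shifts thereof. Since $H\cap\langle a\rangle=\langle a^t\rangle$, these $t$ powers of $a$ form a transversal of $H$ in $G$, and the task reduces to pairing them up under $a^k\mapsto a^{-k}$ modulo $H$, i.e.\ modulo $\langle a^t\rangle$. The element $a^0=e$ is self-paired; any element $a^k$ with $2k\equiv 0\pmod t$ is self-paired up to $H$; and the remaining elements pair off in inverse pairs $a^k,a^{-k}$. For the total-perfect-code version I additionally need the transversal to contain no requirement that $e$ be a representative class separately, and here the presence of the involution $a^sb\in H$ is exactly what lets me replace the representative $e$ of the trivial coset by the involution $a^sb$ itself (or by a reflection), yielding an inverse-closed transversal that avoids the identity; this is why $H\nleq\langle a\rangle$ forces $H$ to be a total perfect code with no parity obstruction. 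I expect the main obstacle to be the bookkeeping of the self-paired coset classes when $t$ is even, where one must verify that a representative $a^{t/2}$ can be replaced by an involution lying in its coset (namely a reflection $a^{s+t/2}b$ or similar) so that the transversal remains genuinely inverse-closed rather than merely inverse-closed modulo $H$; carrying out Lemma~\ref{Transversal}'s requirement of an honest inverse-closed transversal, as opposed to one that is only symmetric up to $H$, is the delicate point and is precisely where the reflections in $H$ must be used.
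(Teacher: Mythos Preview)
Your reduction via Lemma~\ref{lem32} and the overall plan are correct, and your outlined construction for the case $H=\langle a^t,a^sb\rangle$ does go through once the bookkeeping on the self-paired cosets is completed. However, your route is genuinely different from the paper's and noticeably more laborious.

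The paper chooses its transversal \emph{entirely from reflections}: for the total perfect code it takes $R=\{b,ba,ba^2,\dots,ba^{t-1}\}$, and for the perfect code it takes $S\cup\{e\}=\{e,a^{s-1}b,\dots,a^{s-t+1}b\}$. Since every reflection in $\D_{2n}$ is an involution, these sets are automatically inverse-closed, with no pairing argument and no parity split on $t$; one then checks $\overline{R}\cdot\overline{H}=\overline{\D_{2n}}$ and $\overline{S\cup\{e\}}\cdot\overline{H}=\overline{\D_{2n}}$ by a short group-ring computation via Lemma~\ref{thm1}. By contrast, you take representatives mostly from $\langle a\rangle$, pair $a^k$ with $a^{-k}$, and are forced to substitute reflections only for the self-paired cosets $H$ and (when $t$ is even) $a^{t/2}H$; this is exactly the ``delicate point'' you flag at the end. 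Your approach works, but the paper's observation that \emph{every} left coset of $H$ contains a reflection (namely $a^k\cdot a^sb\in a^kH$) lets one bypass the case analysis altogether.
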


\begin{proof}
By Lemma~\ref{lem32}, we only need to prove that every subgroup $H=\langle a^t,a^sb\rangle$ with $t > 1$ dividing $n$ and $0\leq s\leq t-1$ is a perfect code as well as a total perfect code of $\D_{2n}$. Note that
\[
\overline{H}=(e+a^t+a^{2t}+\dots+a^{n-t})(e+a^sb).
\]
Let $R=\{b,ba,ba^2,\dots,ba^{t-1}\}$ and $S=\{a^{s-1}b,a^{s-2}b,\dots,a^{s-t+1}b\}$. Then $R^{-1}=R$ and $S^{-1}=S$. Moreover,
\begin{align*}
\overline{R}\cdot\overline{H}&=b(e+a+a^2+\dots+a^{t-1})(e+a^t+a^{2t}+\dots+a^{n-t})(e+a^sb)\\
&=b(e+a+a^2+\dots+a^{n-1})(e+a^sb)\\
&=b(e+a+a^2+\dots+a^{n-1})+b(e+a+a^2+\dots+a^{n-1})a^sb\\
&=b(e+a+a^2+\dots+a^{n-1})+(e+a+a^2+\dots+a^{n-1})\\
&=\overline{\D}_{2n}.
\end{align*}
Since $\overline{S}=a^s(\overline{R}-b)$, we have
\begin{align*}
\overline{S\cup\{e\}}\cdot\overline{H}&=(a^s(\overline{R}-b)+e)\overline{H}
=a^s\overline{R}\cdot\overline{H}-a^sb\overline{H}+\overline{H}=a^s\overline{G}-\overline{H}+\overline{H}=\overline{\D}_{2n}.
\end{align*}
Hence by Lemma~\ref{thm1}, $H$ is a total perfect code in $\Cay(\D_{2n},R)$ and a perfect code in $\Cay(\D_{2n},S)$.
\end{proof}

\section{Perfect-code-preserving automorphisms}
\label{sec:pcp}

In this section we consider how we can construct more (total) perfect codes in a Cayley graph from known ones using automorphisms of the underlying group. With this in mind we introduce the following definition.

\begin{definition}\label{def1}
Let $G$ be a group and $\sigma$ an automorphism of $G$.
\begin{itemize}
\item[(i)] $\sigma$ is \emph{perfect-code-preserving} if for any Cayley graph $\Cay(G,S)$, whenever a subset $C$ of $G$ is a perfect code in $\Cay(G,S)$, $C^\sigma$ is also a perfect code in $\Cay(G,S)$.
\item[(ii)] $\sigma$ is \emph{total-perfect-code-preserving} if for any Cayley graph $\Cay(G,S)$, whenever a subset $C$ of $G$ is a total perfect code in $\Cay(G,S)$, $C^\sigma$ is also a total perfect code in $\Cay(G,S)$.
\end{itemize}
\end{definition}

Thus, (total-) perfect-code-preserving automorphisms provide a method for constructing more (total) perfect codes in a Cayley graph from known ones. This method cannot be deduced from the results in the previous section as $C$ in Definition \ref{def1} is not required to be a subgroup of $G$. Obviously, the set of (total-) perfect-code-preserving automorphisms of $G$ form a subgroup of the automorphism group of $G$.

An automorphism of a group is called a \emph{power automorphism} if it maps every element of the group to some power of it. It is clear that power automorphisms of a group $G$ are precisely the automorphisms fixing every subgroup of $G$. The power automorphisms of $G$ form a subgroup of the automorphism group of $G$. The study of inner power automorphisms dates back to Baer \cite{Baer1935}, and a comprehensive study of general power automorphisms can be found in~\cite{Cooper1968}.

\subsection{An example for abelian groups}

\begin{theorem}\label{thm4a}
Let $G$ be an abelian group and $\sigma$ a power automorphism of $G$. If $A$ and $B$ are subsets of $G$ such that $\overline{A}\cdot\overline{B}=\overline{G}$, then $\overline{A^\sigma}\cdot\overline{B}=\overline{G}$. In particular, if $(A, B)$ is a tiling of $G$, then so is $(A^\sigma, B)$.
\end{theorem}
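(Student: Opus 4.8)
The plan is to leave the group ring and work on the character (Fourier) side, where the factorization condition becomes transparent and where a Galois symmetry that is invisible inside $\mathbb{Z}[G]$ comes into play. As a preliminary reduction I would first observe that a power automorphism of a \emph{finite abelian} group is automatically uniform, i.e.\ of the form $g\mapsto g^m$ for a single integer $m$ with $\gcd(m,|G|)=1$. Indeed, writing $G=\langle a_1\rangle\times\dots\times\langle a_k\rangle$ and $\sigma(a_i)=a_i^{r_i}$, the requirement that $\sigma(a_ia_j)$ be a power of $a_ia_j$ forces the exponents $r_i$ to agree modulo the smaller of the orders of $a_i,a_j$; propagating these congruences yields one exponent $m$, coprime to $\exp(G)$ and hence to $|G|$, with $\sigma(g)=g^m$ for every $g$. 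So I may assume $A^\sigma=\{a^m:a\in A\}$.

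Next I would reformulate the hypothesis through the characters $\chi\in\widehat{G}$. Since $G$ is finite abelian, evaluation at all characters gives an algebra isomorphism $\mathbb{C}[G]\xrightarrow{\ \sim\ }\mathbb{C}^{\widehat{G}}$, so $\overline{A}\cdot\overline{B}=\overline{G}$ holds if and only if $\hat{A}(\chi)\hat{B}(\chi)=\hat{G}(\chi)$ for every $\chi$, where $\hat{A}(\chi)=\sum_{a\in A}\chi(a)$. For the trivial character this is $|A|\,|B|=|G|$, and for each nontrivial $\chi$ it is $\hat{A}(\chi)\hat{B}(\chi)=0$. Hence the factorization is equivalent to the two conditions $|A|\,|B|=|G|$ and $\{\chi:\hat{A}(\chi)\neq0\}\cap\{\chi:\hat{B}(\chi)\neq0\}=\{1\}$; in particular it depends on $A$ only through its ``Fourier support'' $\{\chi:\hat{A}(\chi)\neq0\}$.

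The crux is then a short Galois computation. Let $\omega$ be a primitive $\exp(G)$-th root of unity and $\tau$ the automorphism of $\mathbb{Q}(\omega)$ sending $\omega\mapsto\omega^m$, which is well defined because $\gcd(m,\exp(G))=1$. As every value $\chi(a)$ is an $\exp(G)$-th root of unity, $\tau(\chi(a))=\chi(a)^m=\chi(a^m)$, so that $\widehat{A^\sigma}(\chi)=\sum_{a\in A}\chi(a^m)=\tau(\hat{A}(\chi))$. Since $\tau$ is a field automorphism, it is injective, whence $\widehat{A^\sigma}(\chi)=0$ if and only if $\hat{A}(\chi)=0$. Thus $A^\sigma$ has exactly the same Fourier support as $A$, while $|A^\sigma|=|A|$ because $\sigma$ is a bijection. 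Both conditions of the previous paragraph are therefore inherited by the pair $(A^\sigma,B)$, giving $\overline{A^\sigma}\cdot\overline{B}=\overline{G}$; and if $(A,B)$ is a tiling then $e=\sigma(e)\in A^\sigma$, so $(A^\sigma,B)$ is again a normed factorization, i.e.\ a tiling.

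The main obstacle is conceptual rather than computational. The most natural attack stays in $\mathbb{Z}[G]$: the automorphism $\sigma$ extends to a ring automorphism sending $\overline{X}\mapsto\overline{X^\sigma}$, but applying it to $\overline{A}\cdot\overline{B}=\overline{G}$ only yields $\overline{A^\sigma}\cdot\overline{B^\sigma}=\overline{G}$, transforming \emph{both} factors, whereas the theorem requires changing $A$ while keeping $B$ fixed. What makes this decoupling possible is precisely the invariance of the Fourier support under $\chi\mapsto\chi^m$; recognising that this is a genuine Galois symmetry, and that the factorization property sees $A$ only through that support, is the heart of the argument.
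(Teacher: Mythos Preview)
Your proof is correct and follows essentially the same route as the paper: reduce to characters via the isomorphism $\mathbb{C}[G]\cong\mathbb{C}^{\widehat{G}}$, then use a Galois automorphism of the cyclotomic field to show that $\widehat{A^\sigma}(\chi)$ and $\hat{A}(\chi)$ vanish at exactly the same $\chi$, so the factorization condition transfers from $(A,B)$ to $(A^\sigma,B)$. Your version is in fact slightly more careful than the paper's, since you explicitly justify that a power automorphism of a finite abelian group is uniform (i.e.\ $g\mapsto g^m$ for a single $m$ coprime to $|G|$), whereas the paper silently invokes this fact by introducing the exponent without comment.
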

 
This together with Lemma~\ref{thm1} implies the following result.

\begin{corollary}\label{thm4}
Let $G$ be an abelian group. Then every power automorphism of $G$ is perfect-code-preserving and total-perfect-code-preserving. In particular, for any positive integer $m$ coprime to $|G|$, the automorphism of $G$ defined by $g\mapsto g^m,\ g \in G$ is perfect-code-preserving and total-perfect-code-preserving.
\end{corollary}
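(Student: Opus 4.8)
The final corollary is a quick consequence of Theorem~\ref{thm4a}, so I would first record that reduction and then concentrate on the theorem itself. Given a Cayley graph $\Cay(G,S)$ and a perfect code $C$ in it, Lemma~\ref{thm1}(a) gives $\overline{S\cup\{e\}}\cdot\overline{C}=\overline{G}$; since $\mathbb{Z}[G]$ is commutative ($G$ is abelian) this equals $\overline{C}\cdot\overline{S\cup\{e\}}=\overline{G}$, so Theorem~\ref{thm4a} applied with $A=C$, $B=S\cup\{e\}$ yields $\overline{C^\sigma}\cdot\overline{S\cup\{e\}}=\overline{G}$, i.e.\ $\overline{S\cup\{e\}}\cdot\overline{C^\sigma}=\overline{G}$, and Lemma~\ref{thm1}(a) again makes $C^\sigma$ a perfect code in $\Cay(G,S)$. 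The total case is identical with $B=S$ and Lemma~\ref{thm1}(b), and the ``in particular'' clause is just the observation that $g\mapsto g^m$ with $\gcd(m,|G|)=1$ is a power automorphism. So everything rests on Theorem~\ref{thm4a}.

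My first step for Theorem~\ref{thm4a} would be to upgrade $\sigma$ from an abstract power automorphism to an honest uniform power map: I claim there is a single integer $m$ coprime to $\exp(G)$ with $g^\sigma=g^m$ for all $g\in G$. To see this I would split $G$ into its primary components (each is characteristic, hence $\sigma$-invariant) and argue on a finite abelian $p$-group $\langle a_1\rangle\times\dots\times\langle a_k\rangle$: writing $a_i^\sigma=a_i^{m_i}$ and, for $a_1$ of maximal order, expanding $\sigma(a_1a_i)=(a_1a_i)^{m'}$ (valid since $\sigma$ fixes $\langle a_1a_i\rangle$) and comparing the two direct factors forces $m_i\equiv m_1$ modulo the order of $a_i$; thus $g^\sigma=g^{m_1}$ on the whole $p$-component, and the Chinese Remainder Theorem glues these into a global $m$. (Alternatively this uniformity can be quoted from \cite{Cooper1968}.)

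The heart of the argument is then character analysis. Since $G$ is abelian, evaluation at characters gives a ring isomorphism $\mathbb{C}[G]\to\prod_{\chi\in\widehat{G}}\mathbb{C}$, $\overline{X}\mapsto(\widehat{X}(\chi))_\chi$ with $\widehat{X}(\chi)=\sum_{x\in X}\chi(x)$. Hence $\overline{A}\cdot\overline{B}=\overline{G}$ is equivalent to $\widehat{A}(\chi)\widehat{B}(\chi)=\widehat{G}(\chi)$ for every $\chi$, where $\widehat{G}(\chi)=|G|$ for the trivial character and $0$ otherwise; and $\overline{A^\sigma}\cdot\overline{B}=\overline{G}$ is the same system with $\widehat{A}$ replaced by $\widehat{A^\sigma}$. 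Using $g^\sigma=g^m$ one computes $\widehat{A^\sigma}(\chi)=\sum_{a\in A}\chi(a)^m=\widehat{A}(\chi^m)$. The key observation is that this is a Galois twist: fixing a primitive $\exp(G)$-th root of unity $\zeta$ and the automorphism $\tau_m\in\Gal(\mathbb{Q}(\zeta)/\mathbb{Q})$ with $\tau_m(\zeta)=\zeta^m$ (well defined as $\gcd(m,\exp(G))=1$), every character value satisfies $\tau_m(\chi(a))=\chi(a)^m$, so $\tau_m(\widehat{A}(\chi))=\widehat{A}(\chi^m)=\widehat{A^\sigma}(\chi)$. Because $\tau_m$ is a field automorphism, $\widehat{A}(\chi)=0$ if and only if $\widehat{A^\sigma}(\chi)=0$.

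With this in hand the verification is routine: for the trivial character $\widehat{A^\sigma}(1)\widehat{B}(1)=|A|\,|B|=|G|$, while for $\chi\neq1$ either $\widehat{B}(\chi)=0$ (and the product vanishes) or $\widehat{B}(\chi)\neq0$, in which case $\widehat{A}(\chi)=0$ forces $\widehat{A^\sigma}(\chi)=0$ and the product again vanishes. Thus $\overline{A^\sigma}\cdot\overline{B}=\overline{G}$, and the tiling statement follows because $e^\sigma=e$, so $(A^\sigma,B)$ is again normed. I expect the main obstacle to be conceptual rather than computational: recognizing that the asymmetry of the claim (we transform $A$ but leave $B$ fixed) is exactly absorbed by the Galois conjugacy $\widehat{A^\sigma}(\chi)=\tau_m(\widehat{A}(\chi))$, which in turn is what forces us first to prove that $\sigma$ is a genuine uniform power map rather than a merely subgroup-fixing automorphism.
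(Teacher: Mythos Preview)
Your proof is correct and follows essentially the same route as the paper: reduce to Theorem~\ref{thm4a} via Lemma~\ref{thm1}, then evaluate at characters and use a Galois automorphism of the relevant cyclotomic field to show that $\widehat{A}(\chi)=0$ if and only if $\widehat{A^\sigma}(\chi)=0$, which is exactly the paper's Lemma~\ref{thm2} argument. You are in fact slightly more careful than the paper, which silently writes $\gcd(n,m)=1$ as though the power automorphism were already known to be a uniform $n$th-power map; your explicit reduction of a general power automorphism to a single global exponent (or the citation of \cite{Cooper1968}) fills that small gap.
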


To prove Theorem \ref{thm4a}, we will need the following lemma which is obtained by a direct application of the convolution property of Fourier transforms on finite groups. Given a group $G$, let $\widehat{G}$ be a complete set of inequivalent irreducible representations of $G$. For any function $f:G\rightarrow\mathbb{C}$, the \emph{Fourier transform} of $f$ is defined to be the map
\[
\widehat{f}:\quad\rho\mapsto\sum_{g\in G}f(g)\rho(g),\quad\rho\in\widehat{G}.
\]
Recall that  for any subset $A$ of $G$, $\mu_A: G \rightarrow \mathbb{C}$ as defined in \eqref{eq:mu} is the characteristic function of $A$.

\begin{lemma}\label{thm2}
Let $G$ be a group, and let $A$ and $B$ be subsets of $G$. Then the following statements are equivalent:
\begin{itemize}
\item[(a)] $\overline{A}\cdot\overline{B}=\overline{G}$;
\item[(b)] $\left(\sum_{g\in A}\rho(g)\right)\left(\sum_{g\in B}\rho(g)\right)=\sum_{g\in G}\rho(g)$ for every $\rho\in\widehat{G}$;
\item[(c)] $\widehat{\mu_A}(\rho)\widehat{\mu_B}(\rho)=\widehat{\mu_G}(\rho)$ for every $\rho\in\widehat{G}$.
\end{itemize}
\end{lemma}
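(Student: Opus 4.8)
The plan is to prove the three equivalences in two stages: first dispose of (b) $\Leftrightarrow$ (c), which is purely notational, and then establish (a) $\Leftrightarrow$ (c) via the convolution property of the Fourier transform. For the first stage, observe that by the very definition of the Fourier transform, $\widehat{\mu_A}(\rho)=\sum_{g\in G}\mu_A(g)\rho(g)=\sum_{g\in A}\rho(g)$, and likewise $\widehat{\mu_B}(\rho)=\sum_{g\in B}\rho(g)$ and $\widehat{\mu_G}(\rho)=\sum_{g\in G}\rho(g)$. Hence conditions (b) and (c) are literally the same statement, and no work is required.

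For the substantive stage, I would first translate the group-ring identity in (a) into a statement about convolution. Under the natural identification $\overline{A}\leftrightarrow\mu_A$ of $\mathbb{Z}[G]$ with integer-valued functions on $G$, the group-ring product corresponds to convolution: writing $\overline{A}\cdot\overline{B}=\sum_{g\in G}c_g\,g$, one checks that $c_g=|\{(a,b)\in A\times B:ab=g\}|=\sum_{h\in G}\mu_A(h)\mu_B(h^{-1}g)=(\mu_A*\mu_B)(g)$. Since $\overline{G}\leftrightarrow\mu_G\equiv1$, condition (a) is therefore equivalent to the functional identity $\mu_A*\mu_B=\mu_G$.

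Next I would apply the convolution theorem $\widehat{\mu_A*\mu_B}(\rho)=\widehat{\mu_A}(\rho)\widehat{\mu_B}(\rho)$ for every $\rho\in\widehat{G}$, together with the injectivity of the Fourier transform: the map $f\mapsto(\widehat{f}(\rho))_{\rho\in\widehat{G}}$ is a bijection of $\mathbb{C}^G$ onto $\bigoplus_{\rho\in\widehat{G}}\mathrm{End}(V_\rho)$ (the Wedderburn/Fourier-inversion isomorphism for finite groups). Consequently $\mu_A*\mu_B=\mu_G$ holds if and only if $\widehat{\mu_A*\mu_B}(\rho)=\widehat{\mu_G}(\rho)$ for all $\rho$, that is, if and only if (c) holds. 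Combined with the previous paragraph this gives (a) $\Leftrightarrow$ (c), completing the chain.

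I expect no genuine obstacle; the one point demanding care is the \emph{order} of multiplication in the non-abelian convolution theorem. Since $G$ need not be abelian, one must verify that $\widehat{\mu_A*\mu_B}(\rho)$ equals $\widehat{\mu_A}(\rho)\widehat{\mu_B}(\rho)$ in exactly this order, which it does because $\rho(hk)=\rho(h)\rho(k)$ and the convolution is indexed so that $g=hk$, matching the order of the factors in (b). An equivalent and perhaps more transparent route bypasses convolution entirely: each $\rho\in\widehat{G}$ extends to an algebra homomorphism $\mathbb{C}[G]\to\mathrm{End}(V_\rho)$, so (a) $\Rightarrow$ (b) is immediate by applying $\rho$, while (b) $\Rightarrow$ (a) follows because the product homomorphism $\bigoplus_{\rho}\rho\colon\mathbb{C}[G]\to\bigoplus_{\rho}\mathrm{End}(V_\rho)$ is injective by Artin--Wedderburn. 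Either way, the lemma is a faithful transcription of the tiling/perfect-code condition into representation-theoretic language, and the main value lies in bookkeeping the correspondence correctly.
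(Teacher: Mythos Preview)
Your proposal is correct and is essentially the same argument as the paper's. The paper arranges the implications as a cycle (a)$\Rightarrow$(b)$\Rightarrow$(c)$\Rightarrow$(a), obtaining (a)$\Rightarrow$(b) by applying $\rho$ to the group-ring identity, (b)$\Rightarrow$(c) from the definition, and (c)$\Rightarrow$(a) via the convolution identity and Fourier inversion; your organization ((b)$\Leftrightarrow$(c) by definition, then (a)$\Leftrightarrow$(c) via convolution plus injectivity of the transform) uses the same ingredients and your ``alternative route'' through Artin--Wedderburn is exactly the paper's (a)$\Rightarrow$(b) together with its inverse-Fourier step phrased more structurally.
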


\begin{proof}
Since (a) can be rewritten as
\[
\left(\sum_{g\in A}g\right)\left(\sum_{g\in B}g\right)=\sum_{g\in G}g,
\]
it is readily seen that (a) implies (b). From the definition of the Fourier transforms of $\mu_A$ and $\mu_B$ one can see that (b) implies (c).

It remains to prove that~(c) implies~(a). Define
\[
f:\quad G\rightarrow\mathbb{C},\quad g\mapsto\sum_{h\in G}\mu_A(h)\mu_B(h^{-1}g).
\]
By the convolution property of Fourier transforms, we have $\widehat{\mu_A}(\rho)\widehat{\mu_B}(\rho)=\widehat{f}(\rho)$ for every $\rho\in\widehat{G}$. Suppose that (c) holds. Then $\widehat{f}(\rho)=\widehat{\mu_G}(\rho)$ for every $\rho\in\widehat{G}$, and therefore $f=\mu_G$ by the formula of the inverse Fourier transform. This implies
\begin{align*}
\overline{A}\cdot\overline{B}&=
\left(\sum_{g\in G}\mu_A(g)g\right)\left(\sum_{g\in G}\mu_B(g)g\right)\\
&=\sum_{g\in G}g\sum_{h\in G}\mu_A(h)\mu_B(h^{-1}g)\\
&=\sum_{g\in G}f(g)g\\
& =\sum_{g\in G}\mu_G(g)g\\
& =\overline{G},
\end{align*}
completing the proof.
\end{proof}

Denote $\zeta_m = \exp(2\pi i/m)$ for any positive integer $m$. As usual, denote by $\mathbb{Q}(\zeta_m)$ the $m$th cyclotomic field ($m \ge 3$) and $\Gal(\mathbb{Q}(\zeta_m)/\mathbb{Q}) \cong \left(\mathbb{Z}/m\mathbb{Z}\right)^{\times}$ the Galois group of the Galois extension $\mathbb{Q}(\zeta_m)/\mathbb{Q}$.

\smallskip
\begin{Proof}\textit{\ref{thm4a}.}
Write $G=\langle g_1\rangle\times\dots\times\langle g_k\rangle$ with $g_j$ of order $m_j$ for $1\leq j\leq k$.
Let $\rho$ be a nontrivial irreducible representation of $G$. Then there exist integers $n_1,\dots,n_k$ such that
\[
\rho:\quad G\rightarrow\mathbb{C},\quad g_1^{a_1}\cdots g_k^{a_k}\mapsto\zeta_{m_1}^{n_1a_1}\cdots\zeta_{m_k}^{n_ka_k}.
\]
Let $m=|G|$. Clearly, $\rho(g)\in\mathbb{Q}(\zeta_m)$ for every $g\in G$. Since $\gcd(n,m)=1$, there exist $\ell\in\mathbb{Z}$ and $\tau\in\Gal(\mathbb{Q}(\zeta_m)/\mathbb{Q})$ such that $n\ell\equiv1\pmod{m}$ and $\tau(\zeta_m)=\zeta_m^\ell$.

Let $A$ and $B$ be subsets of $G$ such that $\overline{A}\cdot\overline{B}=\overline{G}$.
Since $\rho$ is nontrivial, by Lemma~\ref{thm2} we deduce from $\overline{A}\cdot\overline{B}=\overline{G}$ that
\[
\left(\sum_{g\in A}\rho(g)\right)\left(\sum_{g\in B}\rho(g)\right)=\sum_{g\in G}\rho(g)=0.
\]
This means that either $\sum_{g\in A}\rho(g)=0$ or $\sum_{g\in B}\rho(g)=0$. Note that
\begin{align*}
\sum_{g\in A}\rho(g) &= \sum_{g^\sigma\in A^\sigma}\rho(g)\\
&=\sum_{g\in A^\sigma}\rho(g^{\sigma^{-1}})\\
&=\sum_{g\in A^\sigma}\rho(g^\ell)\\
&=\sum_{g\in A^\sigma}\tau(\rho(g))\\
&=\tau\left(\sum_{g\in A^\sigma}\rho(g)\right).
\end{align*}
Since $\tau$ is a field automorphism, we see that $\sum_{g\in A}\rho(g)=0$ if and only if $\sum_{g\in A^\sigma} \rho(g) = 0$. Consequently,
\[
\left(\sum_{g\in A^\sigma}\rho(g)\right)\left(\sum_{g\in B}\rho(g)\right)=0=\sum_{g\in G}\rho(g).
\]
Next let $\rho$ be a trivial irreducible representation of $G$. Since $\overline{A}\cdot\overline{B}=\overline{G}$, we have $|A||B|=|G|$. Hence
\[
\left(\sum_{g\in A^\sigma}\rho(g)\right)\left(\sum_{g\in B}\rho(g)\right)=|A^\sigma||B|=|A||B|=|G|=\sum_{g\in G}\rho(g).
\]

Thus far we have shown that $\left(\sum_{g\in A^\sigma}\rho(g)\right)\left(\sum_{g\in B}\rho(g)\right)=\sum_{g\in G}\rho(g)$ for every irreducible representation $\rho$ of $G$. Appealing to Lemma~\ref{thm2}, we then conclude that $\overline{A^\sigma}\cdot\overline{B}=\overline{G}$.
\qed
\end{Proof}
 
\subsection{Inner automorphisms}

\begin{theorem}\label{prop3}
For any group $G$, every perfect-code-preserving inner automorphism of $G$ is a power automorphism of $G$.
\end{theorem}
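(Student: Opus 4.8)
The plan is to prove the contrapositive. Write the inner automorphism as $\sigma\colon x\mapsto gxg^{-1}$ for a fixed $g\in G$. As recalled just before the statement, the power automorphisms of $G$ are exactly the automorphisms that fix every subgroup setwise; since $\sigma$ is conjugation by $g$, it fixes a subgroup $A$ precisely when $g\in\Nor_G(A)$, i.e. when $g^{-1}Ag=A$. So if $\sigma$ is \emph{not} a power automorphism there is a subgroup $A\leq G$ with $A':=g^{-1}Ag\neq A$, and the goal becomes to exhibit a single Cayley graph of $G$ and a single perfect code in it whose image under $\sigma$ fails to be a perfect code; this shows $\sigma$ is not perfect-code-preserving.

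The graph I would use is $\Ga=\Cay(G,A\setminus\{e\})$, which is legitimate since $A$ is a subgroup (so $A\setminus\{e\}$ is inverse-closed and avoids $e$). Because $(A\setminus\{e\})\cup\{e\}=A$, Lemma~\ref{thm1}(a) says that $C$ is a perfect code in $\Ga$ if and only if $\overline{A}\cdot\overline{C}=\overline{G}$, i.e. $C$ is a right transversal of $A$ (one element in each coset $Ac$); in particular perfect codes exist. The next step is the key translation: I would compute in $\mathbb{Z}[G]$ that $\sigma(C)=gCg^{-1}$ is a perfect code in $\Ga$ iff $\overline{A}\cdot\overline{gCg^{-1}}=\overline{G}$. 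Using $\overline{gCg^{-1}}=g\,\overline{C}\,g^{-1}$ and right-multiplying by $g$ (right translation permutes $G$, so $\overline{G}g=\overline{G}$), this is equivalent to $\overline{A}\cdot g\cdot\overline{C}=\overline{G}$, which unwinds to the condition that $C$ be a right transversal of the conjugate $A'=g^{-1}Ag$. Thus, to break perfect-code-preservation it suffices to produce a right transversal of $A$ that is \emph{not} a right transversal of $A'$.

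This I would achieve by a transversal-swapping argument. Since $A'$ is conjugate to $A$ we have $|A'|=|A|$, so $A'\neq A$ forces $A\setminus A'\neq\emptyset$; fix $a\in A\setminus A'$ (note $a\neq e$ as $e\in A'$). Take any right transversal $C$ of $A$ whose representative of the coset $A=Ae$ is $e$, and form $C^\ast=(C\setminus\{e\})\cup\{a\}$. Because $a\in A$ while every other representative lies outside $A$, we have $a\notin C$ and $C^\ast$ is again a right transversal of $A$. If both $C$ and $C^\ast$ were transversals of $A'$, then, writing $c_j\in C$ for the representative of the coset $A'a\neq A'$ (so $c_j\neq e$, hence $c_j\in C^\ast$, and $c_j\neq a$ since $a\notin C$), the coset $A'a$ would contain the two distinct elements $a$ and $c_j$ of $C^\ast$, contradicting that $C^\ast$ is a transversal of $A'$. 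Hence at least one of $C,C^\ast$ is a right transversal of $A$ that is not a right transversal of $A'$; for that code, $\sigma(C)$ is not a perfect code in $\Ga$, as required.

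The computational heart is the $\mathbb{Z}[G]$ bookkeeping identifying ``$\sigma(C)$ is a perfect code'' with ``$C$ is a transversal of the conjugate $A'$''; the only care needed there is keeping the sides of the cosets straight, and it is a short calculation. The genuinely new idea, and the step I would think hardest about, is the reduction of the whole problem to comparing common transversals of the two distinct conjugate subgroups $A$ and $A'=g^{-1}Ag$ of equal index. Once that reduction is set up, the swapping construction is entirely elementary and finite, so I do not expect a serious obstacle beyond getting this reduction and its coset bookkeeping exactly right.
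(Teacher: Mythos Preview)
Your proof is correct and rests on exactly the same idea as the paper's: take $\Ga=\Cay(G,A\setminus\{e\})$ so that perfect codes are precisely right transversals of $A$, and then show that applying $\sigma$ destroys the transversal property when $A\neq g^{-1}Ag$. Your group-ring reduction to ``$C$ is a right transversal of $A'=g^{-1}Ag$'' is sound.

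The only real difference is the endgame. You produce two candidate transversals $C$ and $C^\ast$ and argue by cases that at least one fails for $A'$. The paper avoids this detour: since $|A'|=|A|$ and $A'\neq A$, pick $b\in A'\setminus A$; then $e$ and $b$ lie in distinct right cosets of $A$, so some right transversal $C$ of $A$ contains both. As $e,b\in A'$, this single $C$ already fails to be a right transversal of $A'$, and hence $\sigma(C)$ is not a perfect code in $\Ga$. Your swap argument works but is one step longer than necessary.
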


\begin{proof}
Suppose that $\sigma$ is a perfect-code-preserving inner automorphism of $G$. Let $g$ be the element of $G$ that induces $\sigma$, and $H$ be a nontrivial subgroup of $G$. Suppose that there exists $h\in H$ such that $ghg^{-1}\notin H$. Then $e$ and $ghg^{-1}$ are in distinct left cosets of $H$. Take $C$ to be a right transversal of $H$ in $G$ containing $e$ and $ghg^{-1}$, and take $S=H\setminus\{e\}$. Thus by Lemma~\ref{thm1}, $C$ is a perfect code in $\Cay(G,S)$. However, $C^\sigma=g^{-1}Cg$ contains $e$ and $h$. Hence $C^\sigma$ is not a right transversal of $H$ in $G$. Thus by Lemma~\ref{thm1}, $C^\sigma$ is not a perfect code in $\Cay(G,S)$, contradicting the assumption that $\sigma$ is perfect-code-preserving. Consequently, $ghg^{-1}\in H$ for all $h\in H$. This implies that $gHg^{-1}=H$ and so $H=g^{-1}Hg$. Since this is true for every subgroup $H$ of $G$, $\sigma$ is a power automorphism of $G$.
\end{proof}

\begin{corollary}
Suppose that $G$ is a group with trivial centre. Then the only perfect-code-preserving inner automorphism of $G$ is the identity.
\end{corollary}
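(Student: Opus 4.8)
The plan is to combine Theorem~\ref{prop3} with the classical theory of the norm of a group. Let $\sigma$ be a perfect-code-preserving inner automorphism of $G$, say $x^\sigma = g^{-1}xg$ for a fixed $g\in G$. By Theorem~\ref{prop3}, $\sigma$ is a power automorphism, so it fixes every subgroup of $G$ setwise; in particular $g$ normalizes $\langle x\rangle$ for every $x\in G$, and since an arbitrary subgroup is generated by the cyclic subgroups it contains, $g$ in fact normalizes every subgroup of $G$. (This is precisely the conclusion $gHg^{-1}=H$ already extracted in the proof of Theorem~\ref{prop3}.) Thus $g$ lies in the \emph{norm} $N(G)$ of $G$, the intersection of the normalizers of all subgroups.

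The next step is to locate $N(G)$ inside the upper central series. By Baer's theorem \cite{Baer1935}, the norm of any group is contained in its second centre $Z_2(G)$. Now I would invoke the hypothesis that $G$ is centreless: since $Z(G)=1$ we have $G/Z(G)=G$, whence $Z_2(G)/Z(G)=Z(G/Z(G))=Z(G)=1$ and therefore $Z_2(G)=1$. Combining these facts, $g\in N(G)\le Z_2(G)=1$, so $g=e$ and $\sigma$ is the identity.

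The only genuinely nontrivial ingredient is the containment $N(G)\le Z_2(G)$; everything else is bookkeeping. I expect this to be the ``main obstacle'' only in the sense that it must be cited (or reproved) rather than rederived from scratch here. It is worth stressing that the centreless hypothesis does real work: for a group with nontrivial centre an inner power automorphism need not be trivial --- for instance in the quaternion group $Q_8$ every subgroup is normal, so conjugation by $i$ is a nonidentity power automorphism --- so one cannot drop the assumption $Z(G)=1$. If one preferred to avoid citing Baer, an alternative route would be to argue directly that $g\in N(G)$ forces $[g,x]\in\langle x\rangle$ for every $x\in G$ and then to exploit centrelessness to force each such commutator to be trivial; but the norm-theoretic argument is cleaner and self-documenting, given that the reference to Baer is already present in the paper.
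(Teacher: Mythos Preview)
Your proof is correct and follows essentially the same route as the paper's: apply Theorem~\ref{prop3} to place the conjugating element $g$ in the norm $N(G)$, use the inclusion $N(G)\le Z_2(G)$, and then observe that $Z(G)=1$ forces $Z_2(G)=1$. The only discrepancy is attributional: the paper cites Schenkman~\cite{Schenkman1960} for the inclusion of the norm in the second centre, whereas you attribute it to Baer~\cite{Baer1935}; Baer introduced and studied the norm, but the containment $N(G)\le Z_2(G)$ is Schenkman's result, so you should adjust the citation.
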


\begin{proof}
Denote by $Z_j$ the $j$th centre of $G$. Since the centre of $G$ is trivial by our assumption, we have $Z_1=\{e\}$ and hence
$$
Z_2=\{x\in G\mid\forall y: x^{-1}y^{-1}xy \in Z_1\}=\{x\in G\mid\forall y: x^{-1}y^{-1}xy=e\}=Z_1=\{e\}.
$$
Let $\sigma$ be a perfect-code-preserving inner automorphism of $G$ induced by $g\in G$. By Theorem \ref{prop3}, $\sigma$ is a power automorphism. Hence $g$ lies in the normalizer of every subgroup of $G$. By~\cite{Schenkman1960}, we then have $g\in Z_2$. Therefore, $g=e$ and so $\sigma$ is the identity.
\end{proof}

\noindent\textsc{Acknowledgements.}~ The authors would like to thank the anonymous referees for their helpful comments. The second author was supported by Australian Research Council grant DP150101066, and the third author by Australian Research Council grant FT110100629.

\end{document}